\newtheorem{thm}{Theorem}[section]
\theoremstyle{definition}
\theoremstyle{remark}
\numberwithin{equation}{section}
\newtheorem{Theorem}[thm]{Theorem}
\newtheorem{Lemma}[thm]{Lemma}
\newcommand{\DeclareAutoPairedDelimiter}[3]{%
	\expandafter\DeclarePairedDelimiter\csname Auto\string#1\endcsname{#2}{#3}%
	\begingroup\edef\x{\endgroup
		\noexpand\DeclareRobustCommand{\noexpand#1}{%
			\expandafter\noexpand\csname Auto\string#1\endcsname*}}%
	\x}
\DeclareAutoPairedDelimiter{\abs}{\lvert}{\rvert}
\DeclareAutoPairedDelimiter{\norm}{\lVert}{\rVert}
\DeclareAutoPairedDelimiter{\bra}{(}{ )}
\DeclareAutoPairedDelimiter{\pra}{[}{]}
\DeclareAutoPairedDelimiter{\set}{\{}{\}}
\DeclareAutoPairedDelimiter{\skp}{\langle}{\rangle}
\DeclareMathAlphabet{\mathup}{OT1}{\familydefault}{m}{n}
\newcommand{\dx}[1]{\mathop{}\!\mathup{d} #1}
\newcommand{\N}{\mathds{N}}
\newcommand{\R}{\mathds{R}}
\newcommand{\cL}{\ensuremath{\mathcal L}}
\newcommand{\sL}{\ensuremath{\mathscr L}}
\newcommand{\tw}{\ensuremath{\tilde w}}
\definecolor{darkblue}{rgb}{0,0,0.6}
\title[A heteroclinic orbit connecting traveling waves \dots]{A heteroclinic orbit connecting traveling waves pertaining to different nonlinearities}
\author{Simon Eberle$^1$}
\address{$^1$Fakultät für Mathematik, Universität Duisburg-Essen.}
\email{simon.eberle@uni-due.de}
\let\rho\varrho
\let\epsilon\varepsilon
\begin{document}

\begin{abstract}
\noindent
In this paper we consider a semilinear parabolic equation in an infinite cylinder. The spatially varying nonlinearity is such that  it connects two (spatially independent) bistable nonlinearities in a compact set in space.
We prove that, given such a setting, a traveling wave obeying the equation with the one bistable nonlinearity and starting at the respective side of the cylinder, will converge to a traveling wave solution prescribed by the nonlinearity on the other side.
	
\end{abstract}
	\maketitle

\section{Introduction}

Since the pioneering works of Kolmogorov et al. \cite{kolmogorov1937etude} the study of traveling wave solutions for semilinear parabolic equations of several types (the most prominent are bistable, ignition and KPP-type nonlinearities) has been an active field. In the case of bistable nonlinearities, that we will concern ourselves with in this article, we want to refer to the celebrated paper by Fife and McLeod \cite{FifeMcLeod} for existence and uniqueness as well as stability in one spatial dimension. We also want to mention the detailed study of traveling waves in cylinders by Berestycki and Nirenberg \cite{BerestyckiNirenberg}. Later Berestycki, Hamel et al. have broadened the field by studying generalizations of traveling waves in domains or with coefficients / nonlinearities that do not allow for traveling wave solutions. In the case of periodic media, this has led to the notion of pulsating fronts \cite{BerestyckiHamelPeriodicExcitableMedia} and recently they have generalized it to the notion of transition fronts that do not require any special properties of the domain (apart from sufficiently smooth boundary and infinite geodesic diameter)  or of the coefficients \cite{BerestyckiHamelGeneralizedTransitionFronts}. 
In this respect we found \cite{MatanoObst} very inspiring where Matano, Berestycki and Hamel use super- and subsolutions as in \cite{MatanoStab} to construct an entire solution that starts as a traveling wave solution for $t \rightarrow - \infty$, passes the obstacle and converges - given the compact obstacle is sufficiently regular - to the same traveling wave solution as $t \rightarrow +\infty$.
Another recent and very interesting work on the construction of generalized transition fronts is \cite{Zlatos} where the author studies generalized transition fronts in cylinders subject to a space dependent nonlinearity that is bounded form above and below by spatially independent ignition-type nonlinearities.
We are trying to investigate a similar problem as is investigated in \cite{Zlatos}, but in our case the nonlinearities are of bistable type and do only vary in a compact transition zone. In contrast to \cite{Zlatos} we are interested in the existence of a heteroclinic connection between two traveling fronts, which is stronger than the very relaxed notion of a transition front (as it is given in \cite{BerestyckiHamelGeneralizedTransitionFronts}).

In this paper we will occupy ourselves with the construction of a transition front in a cylinder $D= \R \times \Omega$. But thanks to the compactness of the transition region, we can construct a heteroclinic orbit between two traveling wave solutions.
To be more precise the nonlinearity $f(x,u)$ shall be such that
\begin{align} \label{bounds_on_f}
\left \{ \begin{aligned}
\begin{split}
f_2(u) \leq f(x,u) \leq f_1(u) \text{ for } x \in D, u \in [0,1], \\
f(x,u ) = f_1(u) \text{ for } x_1 \geq 0, u \in [0,1] \text{ and } \\
f(x,u) = f_2(u) \text{ for } x_1 \leq -x_0, u \in [0,1],
\end{split}
\end{aligned} \right .
\end{align} 
where $f_1,f_2$ are two a-priori given nonlinearities of bistable type and $x_0 >0$ is a parameter of the transition region. 

\begin{figure}[!h]
	\psset{xunit=1cm,yunit=1cm}
	\frame{
		\begin{pspicture*}
		(-5.2,-1.2)(7.1,2.3)
		\psframe[fillstyle=vlines,hatchsep=0.2,hatchangle=120](2,2)
		\psline(-5,2)(7,2)
		\psline(-5,0)(7,0)
		\psline(0,0)(0,-0)
		\psline(2,0)(2,-0)
		\rput[lb](-1.5,0.8){$f_2$}
		\rput[lb](3.3,0.8){$f_1$}
		\rput[lb](-0.4,-0.8){$-x_0$}
		\rput[lb](1.9,-0.8){$0$}
		\end{pspicture*}
	}
	\caption{Infinite cylinder with transition zone}
\end{figure}
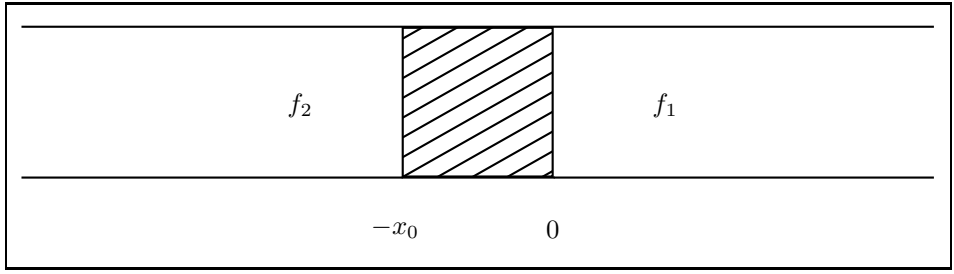

The main result of this article shall be

\begin{Theorem}
	Let $f$ satisfy \eqref{bounds_on_f}  (and \eqref{F_1}-\eqref{F_8}) then there is a unique entire solution $u(t,x)$ of  
	\begin{align} \label{DiffEqu}
	\begin{cases}
	\partial_t u - \Delta u = f(x,u) &\text{ in } D, \\
	\frac{\partial u}{\partial \nu} = 0 &\text{ on } \partial D,
	\end{cases}
	\end{align}
	
	such that $0 < u(t,x) <1$ and $\partial_t u(t,x) >0$ for all $(t,x) \in \R \times \bar{D}$ and
	\begin{align} 
	u(t,x)- \phi_1(x_1+c_1 t) \rightarrow 0 \text{ as } t \rightarrow -\infty \text{ uniformly in } x \in \bar{D},
	\end{align}
	then  
	\begin{align}
	u(t,x) - \phi_2(x_1 + c_2t + \beta) \rightarrow 0  \text{ as } t \rightarrow +\infty \text{ uniformly in } x \in \bar{D}
	\end{align}
	for some $\beta \in \R$.
\end{Theorem}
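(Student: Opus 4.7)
\emph{Strategy and construction.}
My plan is to follow the super/subsolution scheme of Matano--Berestycki--Hamel \cite{MatanoObst,MatanoStab}: build the entire solution $u$ as a monotone limit of Cauchy problems on $[-n,\infty)\times D$, and identify its $t\to+\infty$ asymptotic using the Fife--McLeod stability theory \cite{FifeMcLeod} applied to the autonomous equation with nonlinearity $f_2$. The bounds \eqref{bounds_on_f} provide two natural global barriers: $\phi_1(x_1+c_1t)$ is a supersolution of \eqref{DiffEqu} (since $f\le f_1$), and any sufficiently right-shifted translate $\phi_2(x_1+c_2t+\beta)$ is a subsolution (since $f\ge f_2$); the crucial point exploited throughout is that the inhomogeneity is supported in a compact $x_1$-window, so both barriers are essentially as tight as in the homogeneous Fife--McLeod theory. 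Concretely, I would solve \eqref{DiffEqu} on $[-n,\infty)\times D$ with datum $u_n(-n,x)=\phi_1(x_1-c_1n)$, whence comparison gives $u_n\le \phi_1(x_1+c_1t)$. A Fife--McLeod perturbed subsolution $\phi_1(x_1+c_1t-\xi(t))-q(t)$ with small $\xi,q>0$ governed by ODEs in which the compactly supported residual $f_1-f$ is absorbed by the exponential decay of $\phi_1$ near its endpoints supplies the matching lower bound, with $\xi(t),q(t)\to 0$ as $t\to-\infty$. A sliding argument forces $\partial_t u_n>0$; monotonicity in $n$ plus parabolic Schauder estimates then deliver an entire solution $u$ with $0<u<1$, $\partial_t u>0$, and $u(t,\cdot)-\phi_1(\cdot+c_1t)\to 0$ uniformly on $\bar D$ as $t\to-\infty$.

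\emph{Asymptotics as $t\to+\infty$.}
The monotonicity $\partial_t u>0$ together with $u<1$ yields a stationary pointwise limit $v(x):=\lim_{t\to\infty}u(t,x)$, a classical solution of $-\Delta v=f(x,v)$ in $D$ with Neumann data and $0<v\le 1$. A standard bistable spreading/Liouville argument, using that $u(0,\cdot)\to 1$ as $x_1\to+\infty$ (inherited from the $\phi_1$-like data) and that the middle zero of $f(x,\cdot)$ is bounded away from $1$, forces $v\equiv 1$. Thus $u\to 1$ locally uniformly, and for $t$ sufficiently large the $1/2$-level set of $u$ lies in $\{x_1\le -x_0\}$, where \eqref{DiffEqu} reduces to the autonomous $\partial_t u-\Delta u=f_2(u)$. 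Fixing $\omega_0\in\Omega$, define $\beta(t)$ implicitly by $u(t,-c_2t-\beta(t),\omega_0)=1/2$; the translate $\tilde u(t,x):=u(t,x_1-c_2t-\beta(t),x')$ stays in $(0,1)$ and, on compact $x_1$-windows for $t$ large, satisfies the $f_2$-equation in the moving frame of speed $c_2+\beta'(t)$. Parabolic compactness along times $t_n\to\infty$ extracts $\omega$-limits that, by the classification of bistable fronts in cylinders \cite{BerestyckiNirenberg}, are translates of $\phi_2$. Finally, Fife--McLeod's stability theorem, realised through perturbed super/subsolutions $\phi_2(x_1+c_2t+\beta^\ast\pm\xi(t))\pm q(t)$, upgrades the subsequential convergence to $\beta(t)\to\beta^\ast$ and gives the uniform convergence on $\bar D$ claimed in the theorem.

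\emph{Main obstacle and uniqueness.}
The most delicate point is pinning down the phase $\beta^\ast$ in the $t\to+\infty$ asymptotic: \emph{a priori} the $\omega$-limit set is a one-parameter family of translates of $\phi_2$, and I must rule out a drifting $\beta(t)$. This reduces to a quantitative tail estimate showing that the inhomogeneity $f-f_2$ leaves only an exponentially small (in space) imprint on $u$ once the interface has crossed the transition zone, which is precisely what is needed to invoke the weighted-norm contraction of Fife--McLeod. Uniqueness of the entire solution with the prescribed $t\to-\infty$ asymptotics then follows by a standard squeeze: given any two candidate entire solutions $u,\tilde u$, one sandwiches $u(t+\tau_-,\cdot)\le \tilde u\le u(t+\tau_+,\cdot)$, and the common prescribed profile at $-\infty$ drives $\tau_\pm\to 0$.
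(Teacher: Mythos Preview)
Your overall plan is sound and the existence/uniqueness portion tracks the paper's closely. The substantive difference, and the substantive gap, both lie in the $t\to+\infty$ analysis.

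\textbf{The gap.} You correctly locate the crux --- controlling the tail perturbation $f-f_2$ so that a Fife--McLeod stability argument applies --- but you do not carry it out, and the off-the-shelf Fife--McLeod theorem does \emph{not} apply, because the equation is never the autonomous $f_2$-equation on all of $D$: the tail of the front (where $u\approx 1$) permanently sits in $\{x_1>-x_0\}$ where $f\neq f_2$. Concretely, your only upper barrier is the $\phi_1$-wave, so your level-set shift $\beta(t)$ is a priori only bounded from one side and could drift like $(c_1-c_2)t$ (note $f_1\ge f_2$ forces $c_1\ge c_2$); nothing you wrote rules out the interface settling at a speed strictly between $c_2$ and $c_1$. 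The paper closes this with an explicit construction (Lemma~\ref{Lemma_supersolution_2}): a supersolution $\phi_2(\xi_2^+)+v_2^+(t)$ where $v_2^+$ solves $\dot v_2^+=-\omega v_2^+ + C_fC_{\phi_2}e^{\lambda(x_0-c_2T)}e^{-\lambda c_2(t-T)}$, the forcing term being exactly the bound $|f_1-f_2|(\phi_2(\xi_2^+))\le C_f|1-\phi_2(\xi_2^+)|\le C_fC_{\phi_2}e^{-\lambda\xi_2^+}$ in the tail region $\xi_2^+>A^-$, $x_1>-x_0$. This yields the two-sided sandwich by $\phi_2$-waves that pins the speed to $c_2$. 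The stability statement (Lemma~\ref{lemma_stability}) is then genuinely modified: closeness $|u(t_0,\cdot)-\phi_2(\cdot+c_2t_0+\beta)|\le\epsilon$ propagates only with error $\delta(\epsilon,t_0)$ requiring \emph{both} $\epsilon\to 0$ and $t_0\to\infty$, because the tail keeps injecting an integrable-in-time shift.

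\textbf{The approach.} For the subsequential identification of the limit profile the paper does \emph{not} use your compactness-plus-classification route. It introduces the weighted Lyapunov functional
\[
\cL[u](t)=\int_D e^{-c_2z}\Bigl(\tfrac12|\nabla u|^2-F(u)+H(z)F(1)\Bigr)\dx z\dx y
\]
in moving-frame coordinates, uses the two-sided $\phi_2$-sandwich to show $\cL$ is bounded, and computes $\dot\cL+Q\to 0$ with $Q[u]=\int_D e^{-c_2z}\bigl(\Delta u-c_2\partial_z u+f_2(u)\bigr)^2\dx z\dx y$. This forces $Q(t_n)\to 0$ along a subsequence, so the limit $u_\infty$ satisfies the \emph{elliptic} traveling-wave equation directly; then only front uniqueness in cylinders \cite{BerestyckiNirenberg} is needed. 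Your route would instead produce an entire-in-time parabolic solution of the $f_2$-equation with $\partial_t\ge 0$ and a pinned level set, whose identification as $\phi_2$ requires a Liouville-type classification of monotone entire solutions (available, e.g., from \cite{MatanoObst}), not the front-uniqueness theorem you cite.
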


Here $(\phi_1,c_1)$ and $(\phi_2,c_2)$ are the one-dimensional traveling wave profiles and corresponding speeds solving 
\begin{align} \label{ODE_travelling_wave}
\begin{cases}
\phi_i^{\prime \prime}(z) - c_i \phi_i^\prime(z) + f_i(\phi_i(z)) =0, \\
\phi_i(- \infty) = 0, \phi_i(+ \infty)=1, \\
0<\phi_i(z) < 1\text{ for all } z \in \R .
\end{cases}
\end{align}

This means that we do not only prove that $u$ is a transition front, but also that the effect of the tail of the wave is negligible and it will in speed and profile be governed by $f_2$ for large times. 

We adapt the proof of existence and uniqueness in \cite{MatanoObst} to prove existence and uniqueness for our problem.
The proof of the long-term asymptotic behaviour, that is the crucial part in constructing this heteroclinic orbit, will mainly follow the structure of the argumentation in the celebrated paper by Fife and McLeod \cite{FifeMcLeod}. But it will also take in some inspiration gotten reading and re-reading the very helpful papers \cite{MatanoObst,MatanoStab}. The structure of our procedure will be roughly the following:

\begin{itemize}
	\item First of all we will estimate $u$ from below against some translation of $\phi_2(x_1+c_2t)$ and some correction terms that vanish for $t \rightarrow \infty$.
	\item Secondly we do the estimation of $u$ form above (note that this is the more complicated direction) against a traveling wave with profile $\phi_2$ and speed $c_2$ and again some error terms that vanish as $t \rightarrow +\infty$.
	\item In a third step we will employ the estimates from step 1 and 2 in a Lyapunov-function argument that will tell us that for a subsequence in time $t_n \rightarrow \infty$ the profile of the wave will converge to $\phi_2$ and the speed will converge to $c_2$ while translations cannot be ruled out, yet.
	\item Finally we prove uniqueness of the limit ($t \rightarrow \infty$) using a stability result that comes out of step 1 and 2 and controls the perturbation caused in the transition zone.
\end{itemize}

\section*{Acknowledgements}
We thank G. S. Weiss (supervisor) for the many fruitful discussions and François Hamel for pointing out a correction of the construction of entire solutions in \cite{MatanoObst} to the author.

\section{Notation and assumptions}
In this section we outline the assumptions we make and give the notation that shall be used in the following.
Let $D:= \R \times  \Omega$ be a cylindrical domain, i.e $\Omega \subset \R^{n-1}$ be an open, bounded set with smooth boundary.

The nonlinearity $f$ shall lie between two bistable nonlinearities $f_1$ and $f_2$ as in
\eqref{bounds_on_f}, that obey

\begin{align}
\label{F_1} &f_1, f_2 \in C^{1,1}([0,1]), \\
\label{F_2} &f_1(0) = f_2(0) = 0, f_1(1)= f_2(1) = 0, \\
\label{F_3}&f_1'(0) , f_2'(0) <0 , f_1'(1), f_2'(1) <0, \\
\label{F_4} 
& \left .\begin{aligned}
\begin{split}
 &f_1 <0 \text{ on } (0, \theta_1), f_1 >0 \text{ on } (\theta_1, 1), \\ 
&f_2 <0 \text{ on } (0, \theta_2), f_2 >0 \text{ on } (\theta_2, 1) , \\
&0 < \theta_1 < \theta_2 <1, 
\end{split}\end{aligned} \right \}
\\
\label{F_8}& \int \limits_0^1 f_1(u) \dx{u} , \int \limits_0^1 f_2(u) \dx{u} >0 .
\end{align}

Then there are unique speeds and unique (up to translation) traveling wave profiles for each of the nonlinearities $f_1$ and $f_2$, i.e
there are speeds $c_1, c_2 >0$ and wave profiles $\phi_1, \phi_2: \R \rightarrow \R$, such that (see e.g. \cite{FifeMcLeod}) 
\begin{align}
\text{ for } i \in \{1,2\}:
\begin{cases}
\phi_i(x_1+c_it)  \text{ solves } (\ref{DiffEqu}) \text{ with } f_i \text{ instead of } f \\
\phi_i(- \infty) = 0, \phi_i(+ \infty)=1 \\
0<\phi_i(z) < 1\text{ for all } z \in \R
\end{cases}
\end{align}
This is equivalent to $\phi_i$ solving the ordinary differential equation \eqref{ODE_travelling_wave}.

\section{construction of an entire solution which converges to a traveling wave $\phi_1(x_1+c_1t)$ for $t \rightarrow - \infty$}
Here we state for the sake of completeness the existence and uniqueness result as in \cite{MatanoObst} to show that there is an unique entire solution that admits the claimed asymptotic behaviour for $t \rightarrow - \infty$. The proof can be found in the appendix.

\begin{Theorem}[existence and uniqueness] \label{existence_uniqueness}
	Let $D$ and $f$ be as above. Then there is a \textit{unique} entire solution $u(t,x)$ of  (\ref{DiffEqu}) such that $0 < u(t,x)< 1$ and $\partial_t u(t,x)>0$ for all $(t, x) \in \R \times \bar{D}$ and 
	\begin{align} \label{Anfangsbedingung}
	u(t,x) - \phi_1(x_1+c_1t) \rightarrow 0 \quad \text{ as } t \rightarrow - \infty \text{ uniformly in } x \in \bar{D}.
	\end{align}
	
\end{Theorem}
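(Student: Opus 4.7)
The plan is to adapt the sub/supersolution scheme of Matano--Berestycki--Hamel \cite{MatanoObst} to the spatially varying nonlinearity $f$. The key observation is that $\Phi(t,x) := \phi_1(x_1 + c_1 t)$ is a supersolution of \eqref{DiffEqu} on all of $\R \times \bar D$, since
\[
\partial_t \Phi - \Delta \Phi - f(x,\Phi) = f_1(\phi_1) - f(x,\phi_1) \geq 0
\]
by \eqref{bounds_on_f} and \eqref{ODE_travelling_wave}, and $\Phi$ trivially satisfies the Neumann condition. The main analytic input is a matching subsolution of Fife--McLeod type:
\[
v^-(t,x) := \max\bigl\{\phi_1(x_1 + c_1 t - \xi(t)) - q(t),\, 0\bigr\},
\]
with $q(t) = Ke^{\omega t}$, $\xi(t) = (K'/\omega)(1 - e^{\omega t})$, and suitable constants $K,K',\omega > 0$. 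The verification that $v^-$ is a subsolution on some interval $(-\infty, T_0]$ reduces to a calculation in which the defect $f_1(\phi_1) - f(x,\phi_1)$, supported in the slab $-x_0 \leq x_1 \leq 0$, is absorbed using the exponential smallness of $\phi_1(z)$ as $z \to -\infty$ (a consequence of $f_1'(0)<0$) combined with the Lipschitz bound on $f$. Both $v^-$ and $\Phi$ converge uniformly to $\phi_1(x_1 + c_1 t)$ as $t \to -\infty$.

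For existence, I define for each $n \in \N$ the solution $u_n$ of \eqref{DiffEqu} on $[-n, \infty) \times D$ with Neumann condition and initial data $u_n(-n, \cdot) = \Phi(-n, \cdot)$. The supersolution property gives $u_n \leq \Phi$ on $[-n, \infty)$, so $u_{n+1}(-n, \cdot) \leq \Phi(-n, \cdot) = u_n(-n, \cdot)$, and comparison yields $u_{n+1} \leq u_n$ for $t \geq -n$. After arranging (by enlarging $\xi$ at $t = -n$) that $v^-(-n, \cdot) \leq u_n(-n, \cdot)$, we obtain $v^- \leq u_n \leq \Phi$. The monotone pointwise limit $u := \lim_n u_n$ is then an entire classical solution of \eqref{DiffEqu} by standard interior and boundary parabolic regularity, and satisfies \eqref{Anfangsbedingung} because $\Phi - v^- \to 0$ uniformly as $t \to -\infty$. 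The strict bounds $0 < u < 1$ follow from the strong maximum principle, while $\partial_t u > 0$ follows by applying the strong maximum principle to $w := \partial_t u$, which solves the linear parabolic equation $\partial_t w - \Delta w = f_u(x,u)\,w$ with $w \geq 0$ (since each $u_n$ is increasing in $t$, inherited from the monotonicity of $\Phi$) and $w \not\equiv 0$ (by the asymptotic).

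For uniqueness, suppose $u$ and $\tilde u$ are two entire solutions with the stated properties. Given $\epsilon > 0$, the asymptotic \eqref{Anfangsbedingung} yields $T_\epsilon$ such that for $t \leq T_\epsilon$ both solutions satisfy
\[
\phi_1(x_1 + c_1 t - \eta) - \eta \leq u(t,x), \tilde u(t,x) \leq \phi_1(x_1 + c_1 t + \eta) + \eta,
\]
with $\eta = \eta(\epsilon) \to 0$. Starting at $t_0 = T_\epsilon$, one brackets each of $u$ and $\tilde u$ by shifted and corrected traveling-wave profiles $\phi_1(x_1 + c_1 t \pm \xi(t;t_0)) \pm q(t;t_0)$ built by the same Fife--McLeod construction, and propagates the bracket forward. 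Sending $\epsilon \to 0$ and using parabolic contraction in the tails forces $u \equiv \tilde u$.

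The main obstacle is the subsolution construction itself: because $f_1 - f$ does not vanish in the transition slab, the naive ansatz $\phi_1(x_1+c_1 t) - q(t)$ fails to be a subsolution, and the corrective shift $\xi(t)$ must be finely tuned so that $q' - \xi' \phi_1'$ dominates the spatial defect $f_1(\phi_1) - f(x,\phi_1)$ pointwise. This is tractable because on the slab the wave is exponentially small at large negative times, so the spatial defect is uniformly of order $e^{\omega t}$ for suitable $\omega$; this same exponential margin is what drives the uniqueness argument.
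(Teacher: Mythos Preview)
Your overall strategy---trap between a sub- and supersolution, run a monotone iteration, then argue stability for uniqueness---matches the paper's. Your observation that $\Phi=\phi_1(x_1+c_1t)$ is itself a supersolution of \eqref{DiffEqu} (because $f\le f_1$) is a genuine simplification over the paper's more elaborate $w^+=\min\{\phi_1(x_1+c_1t+\xi)+\phi_1(-x_1+c_1t+\xi),1\}$. The subsolutions differ more substantially: the paper takes $w^-=\max\{\phi_1(x_1+c_1t-\xi)-\phi_1(-x_1+c_1t-\xi),0\}$ for $x_1\ge0$ and $w^-\equiv0$ for $x_1<0$, which eliminates the defect $f_1-f$ by brute force (nothing to check where $w^-=0$), whereas you keep a single shifted profile and try to absorb the defect via the exponential tail of $\phi_1$. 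Your idea is viable in principle, but the formula $\xi(t)=(K'/\omega)(1-e^{\omega t})$ has the wrong sign: it gives $\dot\xi<0$, so the term $-\dot\xi\,\phi_1'$ is positive and cannot compensate $\|f_1'\|_\infty q$ in the middle region; moreover $\xi\to K'/\omega\ne0$ as $t\to-\infty$, so $v^-$ does \emph{not} converge to $\phi_1(x_1+c_1t)$ and the squeeze for \eqref{Anfangsbedingung} fails. One needs $\xi$ increasing with $\xi(-\infty)=0$.

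The more serious gap is the monotonicity in time. You launch $u_n$ from the supersolution $\Phi$ and assert that each $u_n$ is increasing in $t$, ``inherited from the monotonicity of $\Phi$''. But a solution started from a supersolution need not be increasing: at $t=-n$ one has $\partial_t u_n=c_1\phi_1'+f(x,\phi_1)-f_1(\phi_1)$, and on $\{x_1\le -x_0\}$ this equals $(c_1\lambda+f_2'(0)-f_1'(0))\phi_1+o(\phi_1)=(\lambda^2+f_2'(0))\phi_1+o(\phi_1)$, which is negative whenever $|f_2'(0)|>\lambda^2$; nothing in the hypotheses excludes this. The paper repairs this by initializing instead from the subsolution---more precisely, from its time-monotone hull $\tilde w^-(t,x):=\sup_{s<0}w^-(t+s,x)$: a solution launched from a time-nondecreasing subsolution is automatically nondecreasing in $t$, and strict positivity of $\partial_t u$ then comes from the strong maximum principle. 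For uniqueness the paper also proceeds differently, sandwiching the second solution $v$ between time-shifted copies $u(t\pm\sigma\epsilon(1-e^{-\beta t}),x)\pm\epsilon e^{-\beta t}$ of the first; this relies on a separate lemma showing $\partial_t u\ge\delta>0$ on the transition zone $\{\eta\le u\le1-\eta\}$ for all sufficiently negative $t$.
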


\begin{proof}
	The proof mainly resembles the respective one in \cite{MatanoObst} and is therefore deferred to the appendix.
\end{proof}

\section{Long time convergence to a traveling front with profile $\phi_2$ and speed $c_2$ for $t \rightarrow +\infty$}

This section shall be concerned with the limit behaviour of the solution $u(t,x)$ for $t \rightarrow + \infty$.

We will show the following theorem:

\begin{Theorem} \label{Thm_connection_two_traveling_fronts}
	Let $f$ satisfy \eqref{bounds_on_f}, \eqref{F_1}-\eqref{F_8} and let $u(t,x)$ be the unique entire solution of (\ref{DiffEqu}) such that $0 < u(t,x) <1$ and $\partial_t u(t,x) >0$ for all $(t,x) \in \R \times \bar{D}$ and
	\begin{align} \label{boundary_condition_for_negative_times}
	u(t,x)- \phi_1(x_1+c_1 t) \rightarrow 0 \text{ as } t \rightarrow -\infty \text{ uniformly in } x \in \bar{D},
	\end{align}
	then  
	\begin{align}
	u(t,x) - \phi_2(x_1 + c_2t + \beta) \rightarrow 0  \text{ as } t \rightarrow +\infty \text{ uniformly in } x \in \bar{D},
	\end{align}
	where $\beta \in \R$.
\end{Theorem}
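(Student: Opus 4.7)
My plan, modelled on Fife--McLeod~\cite{FifeMcLeod} and the bullet outline above, is to trap $u$ between a sub- and a super-solution both built from the $\phi_2$-profile travelling to the left, then pass to the long-time limit and rule out ambiguity of the translation.

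For the subsolution I would set $u_-(t,x) = \phi_2(x_1 + c_2 t + \xi_-(t)) - q_-(t)$ with $q_-(t) = Q_0 e^{-\alpha t}$ and $\xi_-'(t) = -K q_-(t)$, so that $\xi_-(t) \to \xi_-^\infty$ finite. Using \eqref{ODE_travelling_wave} to cancel the bulk term and the pointwise bound $f \ge f_2$, the required subsolution inequality reduces to $\xi_-'\,\phi_2' - q_-' \le f_2(\phi_2 - q_-) - f_2(\phi_2)$; for $\alpha < \min(|f_2'(0)|,|f_2'(1)|)$ and $K$ large this holds in the tails of $\phi_2$ by the negative slope of $f_2$ at the stable equilibria, and in the middle band $\{\delta \le \phi_2 \le 1-\delta\}$ by strict positivity of $\phi_2'$. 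An initial comparison at a time $T_0$ large enough that $u$ is close to $1$ on $\{x_1 \ge 0\}$ (because the $\phi_1$-wave has swept past), combined with the parabolic maximum principle, then gives $u \ge u_-$ for $t \ge T_0$.

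The main technical step is the symmetric supersolution $u_+(t,x) = \phi_2(x_1 + c_2 t + \xi_+(t)) + q_+(t)$: the bound $f \le f_1$ does \emph{not} suffice on the transition zone $\{-x_0 \le x_1 \le 0\}$, where $f$ may strictly exceed $f_2$. The saving observation is that for $t$ large the wave argument $z = x_1 + c_2 t + \xi_+$ on this zone satisfies $z \ge c_2 t/2$, hence $1 - \phi_2(z) \le C e^{-\lambda_2 c_2 t/2}$, where $\lambda_2 > 0$ is the exponential decay rate of $\phi_2$ at $+\infty$ determined by the linearisation of $f_2$ at $1$. Using $f(\cdot,1)=0$ and Lipschitz continuity of $f$, the excess $f(x, \phi_2 + q_+) - f_2(\phi_2)$ on the bad region is $O(e^{-\lambda_2 c_2 t/2} + q_+)$, and I would absorb it by adding a correction $\varepsilon(t)\chi(x_1)$ with $\chi \ge 0$ smooth, supported in $[-x_0,0]$, and $\varepsilon(t) = C' e^{-\lambda_2 c_2 t/2}$. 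Coupled ODEs for $\xi_+, q_+$ analogous to those of the subsolution then yield $\xi_+(t) \to \xi_+^\infty$ and, for $T_1$ large, $u \le u_+$ for all $t \ge T_1$. I expect this to be the main obstacle.

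For the long-time limit I would pass to the moving frame $v(t,z,x') = u(t, z - c_2 t, x')$, which solves $v_t = \Delta v - c_2 v_z + f(z - c_2 t, v)$. For any sequence $t_n \to +\infty$, parabolic compactness extracts a local $C^{1,2}$ limit $v_\infty$ solving the autonomous equation $v_t = \Delta v - c_2 v_z + f_2(v)$ on $\R \times \R \times \Omega$ (since $f(z - c_2 t, \cdot) \to f_2(\cdot)$ locally in $z$ as $t \to \infty$), sandwiched by Steps~1--2 between $\phi_2(z + \xi_-^\infty)$ and $\phi_2(z + \xi_+^\infty)$. The classification of entire bistable solutions trapped between two translates of $\phi_2$ (again \cite{FifeMcLeod}) forces $v_\infty(t,z,x') = \phi_2(z + \beta_\infty)$ for some $\beta_\infty \in [\xi_-^\infty,\xi_+^\infty]$. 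To rule out dependence of $\beta_\infty$ on the subsequence I would use the monotonicity $\partial_t u > 0$ together with the sandwich; equivalently, the Lyapunov functional $\cE(v)(t) = \int_{\R}\int_\Omega e^{c_2 z}\bigl(\tfrac12|\nabla v|^2 - F_2(v)\bigr)\,dx'\,dz$ (with $F_2' = f_2$) is non-increasing modulo an integrable perturbation arising from the heterogeneity of $f$, forcing a single limiting shift $\beta$.
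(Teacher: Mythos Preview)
Your outline follows the same four-step architecture as the paper, and the crucial observation for Step~2 --- that on the region $\{x_1\ge -x_0\}$ the argument of $\phi_2$ is large so $1-\phi_2$ is exponentially small in $t$ --- is exactly the one the paper exploits. There are, however, three places where your implementation diverges from the paper's and where your sketch would need tightening.

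\emph{Supersolution correction.} The localized term $\varepsilon(t)\chi(x_1)$ is an unnecessary complication: applying the operator produces a contribution $-\varepsilon(t)\chi''(x_1)$, which is negative wherever $\chi$ is convex (in particular near $\partial\{\chi>0\}$) and is of the \emph{same} order as the gain you obtain from the nonlinear term. The paper avoids this entirely by keeping the perturbation spatially constant and letting it solve the forced ODE
\[
\dot v_2^+(t)=-\omega\,v_2^+(t)+C_fC_{\phi_2}e^{\lambda(x_0-c_2T)}e^{-\lambda c_2(t-T)},
\]
so that the transition-zone error is absorbed into $\dot v_2^+$ rather than into a spatial bump. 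This is both simpler and cleaner.

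\emph{Identification of the limit.} Rather than invoking a Liouville-type classification of entire parabolic solutions trapped between two shifts of $\phi_2$ (which is true but is itself a theorem, not a black box in \cite{FifeMcLeod}), the paper uses the Lyapunov functional directly to \emph{produce} the limit: with weight $e^{-c_2 z}$ (your sign is inverted), it shows that $\dot{\cL}[w]+Q[w]\to 0$ with $Q[w]=\int e^{-c_2z}(\Delta w-c_2\partial_z w+f_2(w))^2$, and boundedness of $\cL$ then forces $Q[w](t_n)\to0$ along a subsequence, so the limit already solves the stationary equation $\Delta u_\infty-c_2\partial_z u_\infty+f_2(u_\infty)=0$, whence $u_\infty=\phi_2(\cdot+\beta)$ by travelling-wave uniqueness.

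\emph{Full convergence.} Your monotonicity argument does not work as stated: $\partial_t u>0$ holds at fixed $x$, but in the moving frame the relevant derivative is $\partial_t u-c_2\partial_{x_1}u$, which has no sign. The paper instead proves a separate stability lemma (re-running Steps~1--2 from an arbitrary late time $t_0$) showing that $|u(t_0,\cdot)-\phi_2(\cdot+c_2t_0+\beta)|\le\varepsilon$ implies $|u(t,\cdot)-\phi_2(\cdot+c_2t+\beta)|\le\delta(\varepsilon,t_0)$ for all $t\ge t_0$, with $\delta(\varepsilon,t_0)\to0$ as $\varepsilon\to0$ and $t_0\to\infty$; the point is that the accumulated shift from the transition zone is itself $O(e^{-\lambda c_2 t_0})$. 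This, combined with the subsequential convergence, yields the full limit.
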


As announced in the summary of the strategy given in the introduction, we start by showing the following lower bound.

\begin{Lemma}[Lower bound. The less involved direction]      \label{Lemma_subsolution}
	Under the assumptions of Theorem \ref{Thm_connection_two_traveling_fronts} there exists a time $T^->0$ and constants $\beta^-, C^-, \omega >0$ such that 
	\begin{align}
	\max \{	\phi_2(x_1+c_2t - \beta^-) -C^- e^{-\omega t},0 \} \leq u(t,x)
	\end{align}
	for $x \in D$ and $t \geq T^-$ .
\end{Lemma}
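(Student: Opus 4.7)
My plan is to follow the classical Fife–McLeod subsolution construction. I seek a subsolution of the form
$$\underline v(t,x) := \phi_2\bigl(\zeta(t,x)\bigr) - q_0 e^{-\omega t}, \qquad \zeta(t,x) := x_1 + c_2 t - \beta^- + \sigma e^{-\omega t},$$
for parameters $q_0,\sigma,\omega,\beta^- > 0$ to be fixed. Since $\underline v$ depends only on $(t,x_1)$, the Neumann boundary condition on $\partial D$ is satisfied automatically.

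The first step is to verify the subsolution property on $\{\underline v > 0\}$. A direct computation using the profile ODE $\phi_2'' = c_2\phi_2' - f_2(\phi_2)$ and the pointwise lower bound $f(x,\cdot)\geq f_2(\cdot)$ from \eqref{bounds_on_f} reduces $\partial_t \underline v - \Delta \underline v \leq f(x,\underline v)$ to
$$\bigl[f_2(\phi_2(\zeta)) - f_2(\phi_2(\zeta)-q)\bigr] - \sigma\omega e^{-\omega t}\phi_2'(\zeta) + \omega q \;\leq\; 0, \qquad q := q_0 e^{-\omega t}.$$
Here the Fife–McLeod case split enters. By \eqref{F_3} choose $\kappa,\mu > 0$ so that $f_2'(\cdot)\leq -\mu$ on $[0,2\kappa]\cup[1-2\kappa,1]$, and impose $q_0 \leq \kappa$. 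In the near-endpoint regime $\phi_2(\zeta) \in (q,\kappa]\cup[1-\kappa,1]$, both arguments of $f_2$ lie in $[0,2\kappa]\cup[1-2\kappa,1]$, so the mean value theorem gives $f_2(\phi_2(\zeta)) - f_2(\phi_2(\zeta)-q) \leq -\mu q$; since $\phi_2'\geq 0$, the inequality closes as soon as $\omega \leq \mu$. In the intermediate regime $\phi_2(\zeta) \in [\kappa,1-\kappa]$, the quantity $\phi_2'(\zeta)$ admits a uniform lower bound $\delta > 0$ on this compact $\phi_2$-sublevel, while the bracket is controlled by $L_{f_2}q$; the negative term $-\sigma\omega\delta e^{-\omega t}$ then dominates provided $\sigma\geq q_0(L_{f_2}+\omega)/(\omega\delta)$. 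Fixing $\omega$ small, then $q_0\leq\kappa$, then $\sigma$ large accordingly makes $\underline v$ a subsolution on $\{\underline v > 0\}$.

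The second step is the initial data and the maximum principle. Pick any $\beta^- > 0$, and then $\tau < 0$ so negative that $q_0 e^{-\omega\tau} \geq 1$; this forces $\underline v(\tau,x) \leq \phi_2(\zeta(\tau,x)) - 1 \leq 0$ for every $x\in\bar D$. Because $f(x,0)=0$ by \eqref{bounds_on_f} together with \eqref{F_2}, the constant $0$ is itself a solution of \eqref{DiffEqu}, so the truncation $\underline v^+ := \max(\underline v,0)$ is the pointwise maximum of two subsolutions and hence itself a subsolution of \eqref{DiffEqu} on $D\times[\tau,\infty)$. Since $\underline v^+(\tau,\cdot)\equiv 0 \leq u(\tau,\cdot)$, the parabolic comparison principle in the infinite cylinder with Neumann data yields $\underline v^+(t,x) \leq u(t,x)$ for every $t\geq\tau$ and $x\in\bar D$.

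Finally, $\sigma e^{-\omega t}\geq 0$ together with monotonicity of $\phi_2$ give $\underline v(t,x) \geq \phi_2(x_1+c_2 t - \beta^-) - q_0 e^{-\omega t}$, and combined with $u > 0$ this delivers the lemma with $C^- := q_0$ and $T^- := \max(\tau,1) > 0$. The main obstacle I anticipate is the joint balancing of $\omega,\sigma,q_0,\kappa$ across the two regimes of the case split—essentially choosing them in the right order so both estimates close simultaneously—while the rest is routine comparison. A minor but essential point is that the subsolution inequality need only be verified on $\{\underline v > 0\}$, where both arguments of $f_2$ belong to $[0,1]$; elsewhere the truncation $\underline v^+\equiv 0$ trivially takes over.
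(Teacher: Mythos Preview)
Your subsolution computation in Step~1 is the standard Fife--McLeod argument and matches the paper's, but there is a genuine gap in how you arrange the initial comparison in Step~2.

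The case split in Step~1 requires $q(t)=q_0e^{-\omega t}\le\kappa$: in the upper endpoint regime $\phi_2(\zeta)\in[1-\kappa,1]$ you need $\phi_2(\zeta)-q\ge 1-2\kappa$, which forces $q\le\kappa$. You impose $q_0\le\kappa$, so this holds for $t\ge 0$. But in Step~2 you choose $\tau<0$ so negative that $q(\tau)=q_0e^{-\omega\tau}\ge 1$, and then apply the comparison principle on $[\tau,\infty)$. On the interval of times where $q(t)\in(\kappa,1)$ your subsolution verification is not carried out, and in fact it fails. Take $t$ with $q(t)$ just below~$1$; the set $\{\underline v>0\}=\{\phi_2(\zeta)>q(t)\}$ is nonempty and consists of points with $\phi_2(\zeta)$ close to~$1$. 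There $\phi_2'(\zeta)$ is exponentially small, the bracket $f_2(\phi_2(\zeta))-f_2(\phi_2(\zeta)-q)$ is small (both arguments are near an endpoint, but \emph{opposite} endpoints, so the mean-value point can lie anywhere in $[0,1]$), while the residual term $\omega q(t)\approx\omega>0$ survives. The displayed inequality cannot close.

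The paper handles the initial ordering differently: instead of making $q$ huge at the start, it keeps $q(0)=\mu_2^-\le\rho/2$ small throughout and uses the hypothesis \eqref{boundary_condition_for_negative_times} to show that at some finite time $t_\lambda$ the solution $u(t_\lambda,\cdot)$ already dominates $\phi_2(x_1+c_2t_\lambda-\beta^-)$ after a sufficiently large spatial shift $\beta^-$. Concretely, outside a compact $x_1$-interval both $u(t_\lambda,\cdot)$ and $\phi_2(\cdot+c_2t_\lambda)$ are within $\lambda$ of $0$ or $1$; inside that compact interval $u(t_\lambda,\cdot)$ has a positive minimum, so a large shift of $\phi_2$ slides underneath. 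This is the missing idea in your argument: you must exploit the asymptotic condition on $u$ to get the initial inequality with $q$ already small, rather than trying to force it by inflating $q$.
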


\begin{proof}[Proof of the Lemma]
	The proof follows the idea found in the proof of Theorem 7.1 in \cite{MatanoObst} or the respective proofs in \cite{MatanoStab} and \cite{FifeMcLeod}, i.e. constructing a suitable subsolution as follows.
	First of all, by assumption \eqref{boundary_condition_for_negative_times}, for every $\lambda \in (0,1)$ there is $t_\lambda \in \R$ such that 
	\begin{align*}
	|u(t_\lambda,x) - \phi_1(x_1+c_1 t_\lambda) | \leq \frac{\lambda}{2} \text{ for all } x \in D
	\end{align*}
	and $\zeta_+>0$ such that 
	\begin{align*}
	\phi_1(x_1+c_1 t_\lambda) \geq 1-\frac{\lambda}{2} \text{ and } \phi_2(x_1+c_2 t_\lambda) \geq 1-\frac{\lambda}{2} \text{ for all } x_1 \geq \zeta_+ .
	\end{align*}
		Employing $\partial_t u >0$ and $\phi_1', \phi_2' >0$ this implies that
	\begin{align}
	1 \geq u(t,x) \geq 1 - \lambda \text{ and } 1 \geq \phi_1(x_1+c_1 t), \phi_2(x_1+c_2 t ) \geq 1- \frac{\lambda}{2} 
	\end{align}
	  for all  $x_1 \geq \zeta_+$  and $t \geq t_\lambda$.
	From this we can conclude that
	\begin{align*}
	| \phi_1(x_1+c_1t) - u(t,x)| , | \phi_2(x_1+c_2t) -u(t,x)| \leq \lambda \text{ for } x_1 \geq \zeta_+  \text{ and } t \geq t_\lambda.
	\end{align*}
	There is also $\zeta_- <0$ such that
	\begin{align*}
	0 < \phi_2(x_1+c_2 t_\lambda) , \phi_1(x_1+c_1t_\lambda) \leq \frac{\lambda}{2} \text{ for } x_1 \leq \zeta_- .
	\end{align*}

	Summing this up, we know that outside a compact set $u(t,x)$ is at time $t_\lambda$ already close to $\phi_2(x_1+c_2t)$, i.e
	\begin{align*}
	|u(t_\lambda,x) -\phi_2(x_1+c_2 t_\lambda) | \leq \lambda \text{ for all } x \in \bar{D} \setminus ([\zeta_-, \zeta_+] \times \bar{\Omega}).
	\end{align*}
	
	Let us set furthermore
	\begin{align*}
	\omega := \min \left ( \frac{|f_1'(0)|}{4}, \frac{|f_1'(1)|}{4}, \frac{|f_2'(0)|}{4}, \frac{|f_2'(1)|}{4}, 1      \right ) 
	\end{align*}

	and choose $\rho >0$ such that
	\begin{align*}
	\text{ for } i \in \{ 1,2\}
	\begin{cases}
	|f_i'(s) -f_i'(0)| \leq \omega  &\text{ for all } s \in [0,\rho] \\
	|f_i'(s) -f_i'(1)| \leq \omega &\text{ for all } s \in [1-\rho,1]
	\end{cases} \quad .
	\end{align*}
	
	Let $A^->0$ be such that 
	\begin{align*}
	\begin{cases}
	\phi_2(z) \geq 1-\frac{\rho}{2} &\text{ for all } z \geq A^-, \\
	\phi_2(z) \leq \frac{\rho}{2} &\text{ for all } z \leq -A^- .
	\end{cases}
	\end{align*}
	
	Since $\phi_2'$ is positive and continuous on $\R$ we have
	\begin{align*}
	\delta_2^- := \min \limits_{z \in [-A^-, A^-]} \phi_2'(z) >0  . 
	\end{align*}
	
	Let us set
	
	\begin{align*}
	\mu_2^-:= \min \left \{  \frac{\rho}{2}, \frac{1}{2} \right \} .
	\end{align*}
	Choose now 
	\begin{align}
	\lambda = \mu_2^- .
	\end{align}
	Since $u$ is continuous and $0<u(t,x) <1$  for all $(t,x) \in \R \times \bar{D}$, it follows that
	\begin{align*}
	\min \limits_{x \in [\zeta_-, \zeta_+ ]\times \bar{\Omega}} u(t_\lambda,x) >0 
	\end{align*}
	and therefore there is $\beta^->0$
	\begin{align}
	\phi_2(x_1+c_2t_\lambda -\beta^-)\leq u(t_\lambda,x) \quad \text{ for all } x \in [\zeta_-,\zeta_+] \times \bar{\Omega}
	\end{align}
	because $\lim \limits_{z \rightarrow -\infty} \phi_2(z) =0$.
	Let now 
	\begin{align}
	\tilde{u}(t,x) := u(t-t_\lambda,x) \quad \text{ for } t \geq 0 , x \in D
	\end{align}
	and let us define the following auxiliary functions
	\begin{align}
	v^-(t) &:= \mu_2^- \exp(-\omega t) &&\text{ for } t \geq 0 \\
	V_2^-(t) &:= 4 \|f_2'\|_{L^\infty} \frac{1}{\delta_2^- \omega} \mu_2^- \exp(-\omega t) &&\text{ for } t \geq 0 .
	\end{align}
	
	For later reference let us point out that $v^-$ satisfies the following differential equation
	\begin{align} \label{Diffequ_of_v_minus}
	\dot{v}^-(t) = -\omega v^-(t) \text{ for all } t \geq 0 .
	\end{align}
	Now we have all the auxiliary functions in place to state our candidate for a subsolution
	\begin{align}
	u_2^-(t,x) := \max \{ \phi_2(\xi_2^-(t,x)) -v^-(t),0\} \text{ for } t \geq 0,
	\end{align}
	where $\xi_2^-$ are perturbed moving frame coordinates given by
	\begin{align}
	\xi_2^-(t,x) := x_1 + c_2(t+t_\lambda) - \beta^- + V_2^-(t) -V_2^-(0) \text{ for } t \geq 0, x \in \bar{D}.
	\end{align}
	
	First of all let us check that 
	\begin{align}
	u_2^-(0,x) \leq \tilde{u}(0,x) \text{ for all } x \in \bar{D}
	\end{align}
	by choice of $\beta^-$ and $\mu_2^-$.
	Either we are in the case $x \in [\zeta_-,\zeta_+] \times \bar{\Omega}$ and hence 
	\begin{align}
	u_2^-(0,x) \leq \phi_2(x_1+c_2t_\lambda-\beta^-) \leq \tilde{u}(0,x) = u(t_\lambda,x)
	\end{align}
	by choice of $\beta^-$ or
	$x \in \bar{D} \setminus [\zeta_-,\zeta_+] \times \bar{\Omega}$ and hence 
	\begin{align}
	u_2^-(0,x) &\leq \max \{\phi_2(x_1+c_2 t_\lambda) - \mu_2^- ,0 \} = \max \{\phi_2(x_1+c_2 t_\lambda) - \lambda ,0 \} \\
	 &\leq \tilde{u}(0,x) = u(t_\lambda,x) 
	\end{align}
	because $\phi_2' >0$ and by choice of $\zeta_-,\zeta_+$.
	Since $u_2^-$ was chosen independent of the lateral coordinates of our cylinder, $y \in \bar{\Omega}$, it meets the required Neumann boundary condition
	\begin{align}
	\frac{\partial}{\partial \nu} u_2^-(t,x) = 0 \text{ on } [0,\infty) \times \partial D. 
	\end{align}
	To be able to compare $u_2^-$ and $\tilde{u}$, it remains to show that $u_2^-$ is a subsolution, i.e.
	
	\begin{align}
	\sL u_2^- = \frac{\partial}{\partial t}u_2^- - \Delta u_2^- -f(x,u_2^-) \leq 0 \text{ for all } t \geq 0, x \in D .
	\end{align}
	It suffices to check this on the set where $\phi_2(\xi_2^-(t,x))-v^-(t) >0$ as $\max \{ v_1,v_2\} $ is a subsolution, whenever $v_1$, $v_2$ are subsolutions. Let us therefore restrict ourselves to that set in the following.
	We can estimate $\sL$ as 
	\begin{align}
	\sL u_2^- &= \dot{\xi}_2^-(t,x) \phi_2'(\xi_2^-) - \dot{v}^-(t) - \phi_2''(\xi_2^-) -f(x,\phi_2(\xi_2^-)-v^-(t)) \\
	&= (c_2 +\dot{V}^-_2(t)) \phi_2'(\xi_2^-) - \dot{v}^-(t) - \phi_2''(\xi_2^-) -f(x, \phi_2(\xi_2^-)-v^-(t)) \\
	&= \dot{V}_2^-(t) \phi_2'(\xi_2^-) - \dot{v}^-(t) +f_2(\phi_2(\xi_2^-)) -f(x,\phi_2(\xi_2^-)-v^-(t)) \\
	&\leq \dot{V}_2^-(t) \phi_2'(\xi_2^-) - \dot{v}^-(t) +f_2(\phi_2(\xi_2^-)) -f_2(\phi_2(\xi_2^-)-v^-(t)) .
	\end{align}
	
	
	We will now distinguish between the cases $\xi_2^- < A^-, \xi_2 \in [-A^-,A^-]$ and $\xi_2^- > A^-$. 
	If $\xi_2^- < -A^-$ we have $\phi_2(\xi_2^-) -v^-(t) \leq \phi_2(\xi_2^-) \leq \rho $. Recalling that $\phi_2' >0, \dot{V}_2^-<0$, \eqref{Diffequ_of_v_minus} and involving the mean value theorem, we find
	\begin{align}
	\sL u_2^- &\leq (f_2'(0)+\omega) v^-(t)-\dot{v}^-(t) \\
	&=(f_2'(0)+\omega) v^-(t) + \omega v^-(t) \\
	&\leq 0
	\end{align}
	by the choice of $\omega$.
	If $\xi_2^-> A^-$ we have $\phi_2(\xi_2^-) >\phi_2(\xi_2^-) -v^-(t) \geq 1 -\frac{\rho}{2} -\mu_2^- \geq 1-\rho$ by the choice of $\mu_2^-$. Using again $\phi_2' >0, \dot{V}_2^- <0$, \eqref{Diffequ_of_v_minus} and the mean value theorem it holds
	\begin{align}
	\sL u_2^- \leq (f_2'(1)+\omega) v^-(t)- \dot{v}^-(t) =(f_2'(1)+2 \omega ) v^-(t) \leq 0 
	\end{align}
	again by the choice of $\omega$.
	In the last case, if $\xi_2^- \in [-A^-,A^-]$, by definition of $\delta_2^-$ and $\dot{V}_2^-<0$ it holds
	\begin{align}
	\sL u_2^- &\leq \| f_2' \|_{L^\infty} v^-(t)-\dot{v}^-(t)+ \delta_2^- \dot{V}_2^-(t) \\
	&= ( \|f_2' \|_{L^\infty}+\omega) v^-(t)  + \delta_2^- \dot{V}_2^-(t)  \\
	&\leq (2 \|f_2'\|_{L^\infty}-4 \|f_2'\|_{L^\infty}) v^-(t) \leq 0
	\end{align}
	
	Since we have shown that $u_2^-$ is a subsolution, it follows from the comparison principle that
	\begin{align}
	u_2^- (t,x) \leq \tilde{u}(t,x) \text{ for all } t \geq 0 \text{ and } x \in \bar{D}.
	\end{align}

	Replacing $\beta-$ with $\beta^- + V^-(0)$ finishes the proof.
\end{proof}

In exactly the same way can we estimate the solution from above against a traveling wave with profile $\phi_1$ and speed $c_1$ as stated in

\begin{Lemma}  \label{Lemma_supersolution_phi_1}
	Under the assumptions of Theorem \ref{Thm_connection_two_traveling_fronts} there exists a time $T^+>T^-$ and constants $\beta_1^+, C_1^+, \omega >0$ such that 
	\begin{align}
	\min \{	\phi_1(x_1+c_1t + \beta_1^+) +C_1^+ e^{-\omega t} ,1 \} \geq u(t,x)
	\end{align}
	for $x \in D$ and $t \geq T^+$, where $\omega$ and $T^-$ are as in Lemma \ref{Lemma_subsolution}.
\end{Lemma}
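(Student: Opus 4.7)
This lemma is the mirror image of Lemma \ref{Lemma_subsolution}: I now want a supersolution rather than a subsolution, built around $\phi_1$ rather than $\phi_2$, and the relevant one-sided bound from \eqref{bounds_on_f} is $f(x,u)\le f_1(u)$ rather than $f(x,u)\ge f_2(u)$. Accordingly I will construct
\begin{align*}
u_1^+(t,x) &:= \min\bigl\{\phi_1(\xi_1^+(t,x))+v^+(t),\,1\bigr\}, \\
\xi_1^+(t,x) &:= x_1+c_1(t+t_\lambda)+\beta_1^+ -V_1^+(t)+V_1^+(0),
\end{align*}
keeping the same $\omega$ and $\rho$ as in Lemma \ref{Lemma_subsolution}, defining $v^+(t):=\mu_1^+ e^{-\omega t}$ and $V_1^+(t):=\frac{4\|f_1'\|_{L^\infty}}{\delta_1^+\omega}\mu_1^+ e^{-\omega t}$ with $\mu_1^+:=\min\{\rho/2,1/2\}$, picking $A^+>0$ so that $\phi_1\ge 1-\rho/2$ on $[A^+,\infty)$ and $\phi_1\le\rho/2$ on $(-\infty,-A^+]$, and setting $\delta_1^+:=\min_{[-A^+,A^+]}\phi_1'>0$. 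Since $V_1^+$ is positive and strictly decreasing, $-V_1^+(t)+V_1^+(0)\ge 0$ and $\dot\xi_1^+=c_1-\dot V_1^+\ge c_1$, which is the correct direction for a supersolution sitting above a $c_1$-wave.

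For the initial comparison $u_1^+(0,\cdot)\ge \tilde u(0,\cdot)=u(t_\lambda,\cdot)$ on $\bar{D}$ I re-use (with $\lambda=\mu_1^+$) the three-region decomposition set up at the beginning of the proof of Lemma \ref{Lemma_subsolution}: for $x_1\le \zeta_-$ the bound $|u(t_\lambda,x)-\phi_1(x_1+c_1 t_\lambda)|\le \lambda$ together with $v^+(0)=\mu_1^+\ge\lambda$ and the monotonicity of $\phi_1$ suffices; for $x_1\ge\zeta_+$ one uses $u<1$ and the closeness of both $\phi_1$ and $u$ to $1$; on the compact slice $[\zeta_-,\zeta_+]\times\bar{\Omega}$ one chooses $\beta_1^+$ large enough that $\phi_1(x_1+c_1 t_\lambda+\beta_1^+)\ge\max_{[\zeta_-,\zeta_+]\times\bar{\Omega}}u(t_\lambda,\cdot)$, which is strictly less than $1$. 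The Neumann condition is automatic since $u_1^+$ is independent of the transverse variables.

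The core step is the supersolution inequality $\sL u_1^+\ge 0$. On the set where $\phi_1(\xi_1^+)+v^+\ge 1$ one has $u_1^+\equiv 1$ and $\sL 1=-f(x,1)=0$, and since $\min\{\cdot,\cdot\}$ preserves the supersolution property it suffices to work on the complementary set. There, using the traveling-wave ODE \eqref{ODE_travelling_wave} for $\phi_1$ and then the crucial inequality $f(x,\cdot)\le f_1(\cdot)$ from \eqref{bounds_on_f}, one obtains
\begin{align*}
\sL u_1^+ \ge -\dot V_1^+(t)\,\phi_1'(\xi_1^+)+\dot v^+(t) +f_1(\phi_1(\xi_1^+))-f_1\bigl(\phi_1(\xi_1^+)+v^+(t)\bigr),
\end{align*}
and the three-case split ($\xi_1^+<-A^+$, $\xi_1^+\in[-A^+,A^+]$, $\xi_1^+>A^+$) goes through verbatim by the mean value theorem and the contraction properties of $f_1'$ near $0$ and $1$ implied by the choice of $\omega$ and $\rho$, with the identity $-\dot V_1^+\delta_1^+=4\|f_1'\|_{L^\infty}v^+$ dominating in the middle region. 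The comparison principle then yields $\tilde u\le u_1^+$ on $[0,\infty)\times\bar{D}$, and reverting the time shift $\tilde u(t,\cdot)=u(t-t_\lambda,\cdot)$ absorbs $c_1 t_\lambda+V_1^+(0)$ into $\beta_1^+$ and a factor $e^{-\omega t_\lambda}$ into $C_1^+$, giving the claim for $t\ge T^+$. The only delicate point, and the one place I expect any obstacle, is the sign of the $f$-difference: the argument closes precisely because \eqref{bounds_on_f} bounds $f$ from \emph{above} by $f_1$, which is why this direction produces a supersolution with profile $\phi_1$; the truly hard upper bound in terms of $\phi_2$, needed for $t\to+\infty$, cannot be obtained by a mere sign flip and will require a separate construction later.
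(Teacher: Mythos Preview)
Your proposal is correct and follows exactly the approach the paper indicates: the paper's own proof of this lemma consists of the single sentence that one repeats the construction of Lemma~\ref{Lemma_subsolution} verbatim, exploiting $f(\cdot,u)\le f_1(u)$ in place of $f(\cdot,u)\ge f_2(u)$, and you have carried out precisely this mirror-image argument with all signs, shifts, and the three-case split adjusted appropriately. If anything you have worked harder than necessary on the initial comparison, since the uniform closeness $|u(t_\lambda,\cdot)-\phi_1(\cdot+c_1 t_\lambda)|\le\lambda/2$ from \eqref{boundary_condition_for_negative_times} already gives $u(t_\lambda,x)\le\phi_1(x_1+c_1 t_\lambda)+\mu_1^+$ directly on all of $\bar D$, so the three-region decomposition and the choice of a large $\beta_1^+$ are not strictly needed here (in contrast to Lemma~\ref{Lemma_subsolution}, where one compares against $\phi_2$ rather than $\phi_1$).
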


\begin{proof}
	The proof follows exactly the classical strategy of the proof of Lemma \ref{Lemma_subsolution}. One only needs to exploit that $f(\cdot,u) \leq f_1(u)$ for all $u \in [0,1]$.
\end{proof}

%

With these Lemmata we are in the position to state the more involved direction, i.e. the estimation against a traveling wave with profile $\phi_2$ and speed $c_2$ from above.

\begin{Lemma} \label{Lemma_supersolution_2}
	Under the assumptions of Theorem \ref{Thm_connection_two_traveling_fronts} there exists a time $T>\max\{T^-,T^+\}$ and constants $\beta_2^+, C_2^+, \eta >0$ such that 
	\begin{align}
\min \set {	\phi_2(x_1+c_2t + \beta_2^+) +C_2^+ e^{-\eta t}, 1 } \geq u(t,x)
	\end{align}
	for $x \in D$ and $t \geq T$, where  $T^-$ and $T^+$ are as in Lemmata \ref{Lemma_subsolution} and \ref{Lemma_supersolution_phi_1}.
\end{Lemma}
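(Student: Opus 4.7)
The plan is to construct a supersolution of the form
\[
\bar u(t,x) = \min \bigl\{ \phi_2\bigl(\xi_2^+(t,x)\bigr) + v^+(t),\, 1\bigr\},
\qquad
\xi_2^+(t,x) = x_1 + c_2 t + \beta_2^+ + V_2^+(t),
\]
with $v^+(t) = C_2^+ e^{-\eta(t-T)}$ and $V_2^+(t) = (K/\eta)\bigl(1-e^{-\eta(t-T)}\bigr)$ for constants $C_2^+, K, \eta, T, \beta_2^+ > 0$ to be tuned. Since $1$ is a (trivial) supersolution of \eqref{DiffEqu} (as $f(x,1)=0$) and pointwise minima of supersolutions are supersolutions, it will suffice to verify the supersolution inequality for $\phi_2(\xi_2^+) + v^+$ on the set $\{\phi_2(\xi_2^+) + v^+ < 1\}$.

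The first step will be to arrange that this set lies entirely inside $\{x_1 \leq -x_0\}$, where the nonlinearity reduces to $f_2$. The condition $\phi_2(\xi)+v^+(t) < 1$ amounts to $x_1 < \phi_2^{-1}(1-v^+(t)) - c_2 t - \beta_2^+ - V_2^+(t)$, and linearising the traveling-wave ODE at $\phi = 1$ gives $\phi_2^{-1}(1-s) \sim -(\log s)/\mu$ as $s \downarrow 0$ with $\mu = \bigl(\sqrt{c_2^2 + 4|f_2'(1)|} - c_2\bigr)/2 > 0$. Choosing $0 < \eta < c_2 \mu$ and $T$ large enough guarantees $\phi_2^{-1}(1-v^+(t)) - c_2 t - \beta_2^+ - V_2^+(t) \leq -x_0$ for all $t \geq T$. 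On this region $f(x,u)=f_2(u)$, so
\[
\sL \bar u = \dot V_2^+(t)\, \phi_2'(\xi_2^+) + \dot v^+(t) + f_2\bigl(\phi_2(\xi_2^+)\bigr) - f_2\bigl(\phi_2(\xi_2^+)+v^+(t)\bigr) \geq 0
\]
follows from the same three-case analysis as in the proof of Lemma \ref{Lemma_subsolution} with signs reversed: for $\xi_2^+$ outside some interval $[-A^+, A^+]$ we use the endpoint stability encoded in $f_2'(0), f_2'(1) < 0$ and the smallness of $\eta$; on $[-A^+, A^+]$ we rely on $\phi_2' \geq \delta_2^+ > 0$ combined with $\dot V_2^+(t) = K e^{-\eta(t-T)}$, fixing $K$ large enough to dominate $(\eta + \|f_2'\|_{L^\infty})C_2^+/\delta_2^+$.

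The main obstacle will be arranging the initial ordering $\bar u(T,\cdot) \geq u(T,\cdot)$. Lemma \ref{Lemma_supersolution_phi_1} only provides an upper bound of the form $\phi_1(x_1+c_1T+\beta_1^+)+C_1^+ e^{-\omega T}$, and since $c_1 \geq c_2$ (which follows from $f_1 \geq f_2$ via the bistable wave-speed comparison), the $\phi_1$-profile can still be close to $1$ in regions where any bounded shift of $\phi_2(\cdot + c_2 T + \cdot)$ is essentially $0$; a pointwise comparison of the two profiles is hopeless. The truncation at $1$ is what saves the argument. I will split $\bar D$ into three zones tailored to $T$: on a right zone $\{x_1 \geq R\}$ with $R$ large, I take $\beta_2^+$ so large that $\phi_2(x_1+c_2T+\beta_2^+) \geq 1-C_2^+$, whence $\bar u(T,x) = 1 \geq u(T,x)$; on a left zone $\{x_1 \leq -R'\}$ with $R'$ large, Lemma \ref{Lemma_supersolution_phi_1} together with $\phi_1(z) \to 0$ as $z \to -\infty$ yields $u(T,x) \leq C_2^+$, so that $\bar u(T,x) \geq v^+(T) = C_2^+ \geq u(T,x)$; and on the remaining compact strip $x_1 \in [-R', R]$ a further enlargement of $\beta_2^+$ again forces $\bar u(T,x) = 1 \geq u(T,x)$. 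The constants $R, R', \beta_2^+$ will depend on $T$, but only their existence is needed for the statement.

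Once the initial ordering is secured and the supersolution inequality holds on $\{\bar u < 1\} \subset \{x_1 \leq -x_0\}$ for $t \geq T$, and noting that $\bar u$ satisfies the Neumann condition trivially by being independent of the transverse coordinates, the parabolic comparison principle gives $u(t,x) \leq \bar u(t,x)$ for all $t \geq T$ and $x \in \bar D$. Since $V_2^+(t) \in [0, K/\eta)$ and $\phi_2$ is increasing, one may absorb the term $V_2^+(t)$ into a redefined shift $\beta_2^+ \leftarrow \beta_2^+ + K/\eta$ and the factor $e^{\eta T}$ into a redefined $C_2^+$, yielding the desired estimate in the form stated in the lemma.
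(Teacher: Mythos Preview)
Your proof is correct and takes a genuinely different route from the paper. The key divergence is in how the region $\{x_1 > -x_0\}$, where $f(x,\cdot)$ may differ from $f_2$, is handled. The paper does \emph{not} arrange that the nontrivial set $\{\bar u < 1\}$ lies in $\{x_1 \leq -x_0\}$; instead it allows $\xi_2^+ > A^-$ to intersect $\{x_1 > -x_0\}$ and absorbs the resulting error $|f_1(\phi_2(\xi_2^+)+v) - f_2(\phi_2(\xi_2^+))| \leq C_f |1-\phi_2(\xi_2^+)| \leq C_f C_{\phi_2} e^{-\lambda \xi_2^+}$ by building a more elaborate correction
\[
v_2^+(t) = \Bigl(\gamma - \tfrac{C_f C_{\phi_2} e^{\lambda(x_0-c_2T)}}{\omega-\lambda c_2}\Bigr) e^{-\omega(t-T)} + \tfrac{C_f C_{\phi_2} e^{\lambda(x_0-c_2T)}}{\omega-\lambda c_2}\, e^{-\lambda c_2(t-T)},
\]
which solves $\dot v_2^+ = -\omega v_2^+ + C_f C_{\phi_2} e^{\lambda(x_0-c_2T)} e^{-\lambda c_2(t-T)}$ and thereby cancels the extra source term in $\sL u_2^+$. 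Your trick of using the truncation at $1$ together with the asymptotic $\phi_2^{-1}(1-s) \sim \mu^{-1}\log(1/s)$ and the rate restriction $\eta < c_2\mu$ to force $\{\bar u < 1\} \subset \{x_1 \leq -x_0\}$ eliminates this error term entirely, so a single exponential suffices for $v^+$. Both approaches require the same smallness condition on the decay rate (the paper needs $\omega < \lambda c_2$ for $v_2^+$ to stay nonnegative), so yours is strictly simpler here. The paper's construction, on the other hand, makes the dependence on $t_0$ in the later stability Lemma~\ref{lemma_stability} more transparent, since the two-rate $v_2^+$ isolates the contribution $C(t_0)$ coming from the tail in $\{x_1 > -x_0\}$; with your construction that dependence is hidden inside the choice of $T$ and $\beta_2^+$, but can of course be extracted with a little extra bookkeeping.
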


\begin{proof}
	Let us start the proof with the statement of known facts on one-dimensional traveling waves (see e.g. \cite{FifeMcLeod}) and the statement of some auxiliary constants and functions.
	Linearising \eqref{ODE_travelling_wave} for $\phi_2$ around the stationary point $\phi_2 = 1$ and using that $f_2'(1)<0$ tells us that there is $C_{\phi_2
	}>0$ and $\lambda >0$ such that
	\begin{align} \label{phi_2_asymptotics}
	|1-\phi_2(z)| \leq  C_{\phi_2}e^{-\lambda z} \text{ for all } z \in \R.
	\end{align}
	We assume that $0<\omega < \lambda c_2$. Otherwise slightly decrease $\omega>0$.

	From \eqref{F_1}, \eqref{F_2}, it immediately follows that there is $C_f >0$ such that
	\begin{align}
	|f_1(u)- f_2(u)| \leq C_f |1-u| \text{ for all } u \in [0,1].
	\end{align}
	Let $\omega$, $\rho$ and $A^-$ be as in the proof of Lemma \ref{Lemma_subsolution}.  Let $0<\gamma < \frac{\rho}{4}$ be arbitrary and let $T > \max\{T^-, T^+\}$ be such that
	\begin{align}
	\max\{C^-,C^+\}  ~e^{-\omega T} \leq \frac{\gamma}{2} ~,~  A^- -c_2T \leq -x_0 ~\text{  and } \frac{C_f C_{\phi_2}}{\lambda c_2} e^{\lambda (x_0 -c_2 T)} \leq \frac{\rho}{4}.
	\end{align}
	The second assumption means that we wait until only the tail of the wave will lie in the portion of $D$ where $x_1 \geq -x_0$.
	Let furthermore $\tilde{\zeta}_-<0$ be such that
	\begin{align}
	\phi_1(x_1+c_1T + \beta_1^+) \leq \frac{\gamma}{2} \text{ and } \phi_2(x_1+c_2T) \leq \gamma \text{ for } x_1 \leq \tilde{\zeta}_- 
	\end{align} 
	and $\tilde{\zeta}_+>0$ such that
	\begin{align}
	\phi_2(x_1+c_2 T- \beta^-) \geq 1-\gamma  \text{ and hence also } \phi_2(x_1+c_2T ) \geq 1-\gamma \text{ for } x_1 \geq \tilde{\zeta}_+ .
	\end{align}
	
	Since $\lim \limits_{z \rightarrow + \infty} \phi_2(z) = 1$ and $\max \limits_{x \in [\tilde{\zeta}_-, \tilde{\zeta}_+]\times \bar{\Omega}} u(T,x)<1$, there is $\beta >0$ such that
	\begin{align}
	\phi_2(x_1+c_2T + \beta) \geq u(T,x) \text{ in } [\tilde{\zeta}_-,\tilde{\zeta}_+] \times \bar{\Omega}.
	\end{align}
	Finally let us define
	\begin{align}
	\delta_2^+:=	\min \limits_{z \in [-A^-,A^-]} \phi_2'(z) >0 .
	\end{align}
	Let us now define the following auxiliary functions:
	\begin{align}
	v_2^+(t) &:= \left ( \gamma - \frac{C_f C_{\phi_2}\exp(\lambda (x_0-c_2T))}{\omega-\lambda c_2} \right ) e^{-\omega (t-T)} \\
	&\quad +\frac{C_f C_{\phi_2}\exp(\lambda (x_0-c_2T))}{\omega-\lambda c_2} e^{-\lambda c_2 (t-T)} 
	\end{align}
	for all $ t \geq T$ and
	\begin{align}
	V_2^+(t) &:=  \frac{\|f_2'\|_{L^\infty}+\omega}{\delta_2^+} \int \limits_T^t v(\tau) \dx{\tau} .
	\end{align}
	For later reference, let us note that
	\begin{align} \label{properties_of_v2+}
	\begin{split}
	0 &\leq v_2^+(t) \leq \frac{\rho}{2} \text{ for all } t \geq T ~,~ v_2^+(T)= \gamma  \text{ and } \\
	\dot{v}_2^+(t) &= -\omega v_2^+(t) + C_f C_{\phi_2}\exp(\lambda (x_0-c_2T)) e^{-\lambda c_2 (t-T)}  \text{ for all }  t \geq T.
	\end{split}
	\end{align}
	(That $v_2^+(T)= \gamma$ is obvious. That $0 \leq v_2^+(t)$ for $t\geq T$ can be seen by directly calculating its zeros and employing $\omega < \lambda c_2$ and with this property, the differential equation solved by $v_2^+$ and the choice of $T$ it is easy to see that $v_2^+ \leq \frac{\rho}{2}$.)
	
	Now we have everything in place to state our candidate for the supersolution, namely
	\begin{align}
	u^+_2(t,x) := \min \{\phi_2(\xi_2^+) +v_2^+(t),1 \},
	\end{align}
	where $\xi_2^+(t,x) := x_1+c_2t +\beta +V_2^+(t)$.
	By construction $u_2^+$ does satisfy
	\begin{align}
	\frac{\partial u_2^+}{\partial \nu} = 0 \text{ on } \partial D \times [0,\infty) .
	\end{align}
	For the initial time $T$ we have that either $x \in D \setminus \bra { [\tilde{\zeta}_-,\tilde{\zeta}_+] \times \bar{\Omega} }$ and then 
	\begin{align}
	u(T,x) \leq \min \{\phi_2(x_1+c_2T)+ \gamma ,1\}= \min \{ \phi_2(x_1+c_2T) +v_2^+(T),1\} \leq u_2^+(T,x)
	\end{align}
	or we are in the case that $x \in [\tilde{\zeta}_-,\tilde{\zeta}_+] \times \bar{\Omega}$, then by choice of $\beta>0$ it holds that
	\begin{align}
	u(T,x) \leq \phi_2(x_1+c_2T+\beta) = \phi_2(\xi(T,x)) \leq u_2^+(T,x) .
	\end{align}
	
	It remains to show that $u_2^+$ is indeed a supersolution, for the operator $\sL$, i.e that
	\begin{align}
	\sL u_2^+ = \frac{\partial}{\partial t} u_2^+ - \Delta u_2^+ -f(x,u_2^+) \geq 0 \text{ in } D \text{ for all } t \geq T.
	\end{align}
	It is sufficient to check this in the set $\{u_2^+ <1\}$ ( since the minimum of supersolutions is again a supersolution) therefore we will restrict ourselves to this set in the following.
	We can estimate as follows
	\begin{align}
	\sL u_2^+ &= \dot{\xi}_2^+ \phi_2'(\xi_2^+) +\dot{v}_2^+ -\phi_2''(\xi_2^+) -f(x,\phi_2(\xi_2^+)+v_2^+ (t)) \\
	&= \dot{V}_2^+(t) \phi_2'(\xi_2^+)  + \dot{v}_2^+(t) +f_2(\phi_2(\xi_2^+)) -f(x, \phi_2(\xi_2^+)+v_2^+(t)) \\
	&\geq \begin{cases}
	\dot{V}_2^+(t) \phi_2'(\xi_2^+)  + \dot{v}_2^+(t) +f_2(\phi_2(\xi_2^+)) -f_2(\phi_2(\xi_2^+)+v_2^+(t)) &, x_1 \leq -x_0 \\
	\dot{V}_2^+(t) \phi_2'(\xi_2^+)  + \dot{v}_2^+(t) +f_2(\phi_2(\xi_2^+)) -f_1(\phi_2(\xi_2^+)+v_2^+(t)) &, x_1 > -x_0 
	\end{cases} \\
	&\geq \begin{cases}
	\dot{V}_2^+(t) \phi_2'(\xi_2^+)  + \dot{v}_2^+(t) +f_2(\phi_2(\xi_2^+)) -f_2(\phi_2(\xi_2^+)+v_2^+(t)) \quad , x_1 \leq -x_0 \\
	\dot{V}_2^+(t) \phi_2'(\xi_2^+)  + \dot{v}_2^+(t) +f_1(\phi_2(\xi_2^+)) -f_1(\phi_2(\xi_2^+)+v_2^+(t)) -C_f |1-\phi_2(\xi_2^+)| 
	\\ \quad , x_1 > -x_0 
	\end{cases}
	\end{align}
	Let us distinguish, as in the proof of Lemma \ref{Lemma_subsolution} between the cases $\xi_2^+ > A^-$,  $\xi_2^+ \in [-A^-,A^-]$ and $\xi_2^+ < -A^-$.
	
	In the case $\xi_2^+ >A^-$ we have to treat the case $x_1 \geq -x_0$ as well as the case $x_1 \leq -x_0$.
	If $x_1 \geq -x_0$ we can use \eqref{phi_2_asymptotics} and can then estimate in this case 
	\begin{align}
	\sL u_2^+ \geq \dot{V}_2^+(t) \phi_2'(\xi_2^+) + \dot{v}_2^+(t) -f_1'(\sigma) v_2^+(t) -C_f C_{\phi_2} \exp(\lambda (x_0-c_2T)) e^{-\lambda c_2 (t-T)} \\
	\geq -(f_1'(\sigma)+\omega) v_2^+(t) \geq 0
	,
	\end{align}
	where $\sigma \in (1-\frac{\rho}{2},1)$ comes from the mean value theorem and we have used the definition of $A^-$ and $\omega$, \eqref{properties_of_v2+}, \eqref{F_3} and that $\phi_2', \dot{V}_2^+ \geq 0$.
	
	If $\xi_2^+ > A^-$ and $x_1 \leq -x_0$ then we can estimate
	\begin{align}
	\sL u_2^+ &\geq \dot{V}_2^+(t) \phi_2'(\xi_2^+) + \dot{v}_2^+ -f_2'(\tilde{\sigma}) v^+_2(t) \\
	&= \dot{V}_2^+(t) \phi_2(\xi_2^+) -\omega v_2^+(t) + C_f C_{\phi_2} \exp(\lambda(x_0-c_2T)) e^{-\lambda c_2(t-T)} \\
	&\quad - (f_2'(\tilde{\sigma})+\omega) v_2^+(t) \\
	&\geq - (f_2'(\tilde{\sigma})+\omega) v_2^+(t)  \geq 0,
	\end{align}
	where $\tilde{\sigma} \in (1-\frac{\rho}{2},1)$ comes again from the mean value theorem and the rest follows as before.
	
	In the case $\xi_2^+ \in [-A^-,A^-]$ we are by choice of $T$ always in that portion of $D$ where $x_1 \leq -x_0$ and hence we can estimate
	\begin{align}
	\sL u_2^+ &\geq \dot{V}_2^+(t) \phi'_2(\xi_2^+)+\dot{v}_2^+(t) +f_2(\phi_2(\xi_2^+)) -f_2(\phi_2(\xi_2^+)+v_2^+(t))   \\
	&\geq \dot{V}_2^+(t) \phi_2'(\xi_2^+) + \dot{v}_2^+(t) - \|f_2'\|_{L^\infty} v_2^+(t) \\
	&\geq \dot{V}_2^+(t) \delta_2^+ + \dot{v}_2^+(t) - \|f_2'\|_{L^\infty} v_2^+(t) \\
	&= \dot{V}_2^+(t) \delta_2^+ - \omega v_2^+(t)  + C_f C_{\phi_2} \exp(\lambda(x_0-c_2T)) e^{-\lambda c_2(t-T)} - \|f_2'\|_{L^\infty} v_2^+(t) \\
	&\geq \dot{V}_2^+(t) \delta_2^+ -(\omega +\| f_2' \|_{L^\infty}) v_2^+(t) \\
	&= 0 ,
	\end{align}
	by definition of $V_2^+$ and \eqref{properties_of_v2+}.
	It remains to look into the last case $\xi_2^+ <-A^-$. In this regime again by choice of $T$ we are in the portion of $D$ where $x_1 \leq -x_0$ and hence we can estimate
	\begin{align}
	\sL u_2^+ &\geq \dot{V}_2^+(t) \phi_2(\xi_2^+) +\dot{v}_2^+(t) +f_2(\phi_2(\xi_2^+)) - f_2(\phi_2(\xi_2^+)+v_2^+(t)) \\
	&= \dot{V}_2^+(t) \phi_2'(\xi_2^+) - \omega v_2^+(t) + C_f C_{\phi_2} \exp(\lambda(x_0-c_2T)) e^{-\lambda c_2(t-T)} -f_2'(\bar{\sigma}) v_2^+(t) \\
	&\geq -(f_2'(\bar{\sigma}) +\omega )v_2^+(t) \geq 0 ,
	\end{align}
	where $\bar{\sigma}\in (0, \rho)$ comes from the mean value theorem and we have used the definition of $A^-$ and $\omega$, \eqref{properties_of_v2+}, \eqref{F_3} and that $\phi_2', \dot{V}_2^+ \geq 0$.
	Thereby we have shown that $u_2^+$ has the claimed property of being a supersolution.
	
	This concludes the proof of the Lemma.
\end{proof}

\subsection{approach to a translation of $\phi_2(x_1-c_2 t)$}

The strategy we are going to follow in this section is the classical one, as it is found e.g. in \cite{FifeMcLeod}.

We will need the following auxiliary Lemmata.

\begin{Lemma}[stability] \label{lemma_stability}
	Let $u$ be a solution of \eqref{DiffEqu} that at a time $t_0 >T$ is already close to a traveling wave $\phi_2(x_1+c_2t+\beta)$ for some $\beta \in \R$ i.e. 
	\begin{align}
	|u(t_0,x) -\phi_2(x_1+c_2t_0 + \beta) | \leq \epsilon \text{ for all } x \in D
	\end{align}
	where $0 < \epsilon < \frac{\rho}{4}$ then, it holds for all $t \geq t_0$ and $x \in D$ that
	\begin{align}
	|u(t,x)-\phi_2(x_1+c_2t+\beta)| \leq \delta(\epsilon,t_0) ,
	\end{align}
	where $\delta(\epsilon, t_0) \searrow 0$ as $\epsilon \searrow 0$ and $t_0 \nearrow +\infty$. $T$ and $\rho$ are as in the proof of Lemma \ref{Lemma_supersolution_2}.
	
\end{Lemma}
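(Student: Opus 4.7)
The argument is a classical stability proof for a bistable traveling wave à la Fife--McLeod, with the extra twist that we must also absorb the discrepancy between $f$ and $f_2$ inside the transition zone. The plan is to sandwich $u$ between a subsolution $u^-$ and a supersolution $u^+$ of the form
\begin{align}
u^\pm(t,x) := \phi_2\bigl(x_1+c_2 t+\beta \pm q(t)\bigr) \pm v(t), \qquad (t,x)\in[t_0,\infty)\times \bar D,
\end{align}
where $v(t),q(t)\geq 0$ carry the vertical and horizontal error respectively, with initial data $v(t_0)=\epsilon$ and $q(t_0)=0$. This immediately yields $u^-(t_0,\cdot)\leq u(t_0,\cdot)\leq u^+(t_0,\cdot)$ on $\bar D$ from the hypothesis, and the Neumann boundary condition on $\partial D$ is automatic since $u^\pm$ is independent of the transverse variables.

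The ODEs prescribed for $v$ and $q$ mirror the construction in the proof of Lemma \ref{Lemma_supersolution_2}. Setting $\delta_2:=\min_{z\in[-A^-,A^-]}\phi_2'(z)>0$, I would take
\begin{align}
\dot v(t) &= -\omega\, v(t) + C_f C_{\phi_2}\, e^{\lambda(x_0-c_2 t_0-\beta)}\, e^{-\lambda c_2(t-t_0)},\\
\dot q(t) &= \frac{\|f_2'\|_{L^\infty}+\omega}{\delta_2}\, v(t).
\end{align}
Verifying $\cL u^+\geq 0\geq \cL u^-$ would then proceed exactly by the case split in the proof of Lemma \ref{Lemma_supersolution_2}: the three regimes for $\xi^\pm:=x_1+c_2 t+\beta\pm q(t)$, namely $\xi^\pm<-A^-$, $\xi^\pm\in[-A^-,A^-]$ and $\xi^\pm>A^-$, crossed with the two regimes $x_1\leq -x_0$ (where $f=f_2$) and $x_1>-x_0$ (where $|f(x,u)-f_2(u)|\leq C_f |1-u|$). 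In the latter column one further invokes \eqref{phi_2_asymptotics} together with $\xi^\pm\geq -x_0+c_2 t+\beta-\|q\|_{L^\infty}$ to get $|1-\phi_2(\xi^\pm)|\leq C\, C_{\phi_2}\, e^{\lambda(x_0-c_2 t_0-\beta)}\, e^{-\lambda c_2(t-t_0)}$, which is precisely the forcing absorbed by the $v$-equation.

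Explicit integration of the two ODEs shows $\sup_{t\geq t_0}v(t)$ and $\sup_{t\geq t_0}q(t)$ are both bounded by $C\bigl(\epsilon + e^{\lambda(x_0-c_2 t_0)}\bigr)$, so the parabolic comparison principle yields
\begin{align}
|u(t,x)-\phi_2(x_1+c_2 t+\beta)|\;\leq\; v(t)+\|\phi_2'\|_{L^\infty}\, q(t)\; =:\; \delta(\epsilon,t_0),
\end{align}
and this $\delta$ tends to $0$ as $\epsilon\searrow 0$ and $t_0\nearrow +\infty$. The main technical point I expect is the second ingredient of this limit: in the classical constant-coefficient Fife--McLeod setup $\delta$ depends linearly on $\epsilon$ alone, whereas here $q$ inherits an extra, $\epsilon$-independent, contribution from the space-dependence forcing in the $v$-equation, which would persist even for $\epsilon=0$. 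The requirement $t_0>T$ (together with the third inequality used to choose $T$ in Lemma \ref{Lemma_supersolution_2}) is precisely what makes this contribution vanish as $t_0\to\infty$, because the tail of $\phi_2$ has by then cleared the transition zone $[-x_0,0]$.
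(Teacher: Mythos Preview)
Your proposal is correct and follows essentially the same route as the paper: revisit the barrier constructions of Lemmata~\ref{Lemma_subsolution} and~\ref{Lemma_supersolution_2}, now started at time $t_0$ with initial vertical error $\epsilon$, and read off that both the vertical error $v$ and the accumulated shift $q$ stay bounded by $C(\epsilon + C(t_0))$ with $C(t_0)\to 0$ as $t_0\to\infty$. Your identification of the second ingredient---the $\epsilon$-independent contribution from the tail of $\phi_2$ still sitting in the region $x_1>-x_0$, which is killed only by taking $t_0$ large---is exactly the point the paper singles out in its opening remark to the proof.

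The one small difference is that you run the sub- and supersolution \emph{symmetrically}, inserting the forcing term into the $v$-equation for both barriers. The paper instead exploits the one-sided inequality $f(x,\cdot)\ge f_2(\cdot)$ to use the simpler subsolution of Lemma~\ref{Lemma_subsolution} (no forcing) for the lower barrier; this yields the sharper one-sided estimate $u-\phi_2\ge -C\epsilon$, independent of $t_0$. Since the overall $\delta$ is governed by the weaker (upper) side anyway, your symmetric treatment costs nothing in the end, and it has the advantage of a single computation. Either way, for the subsolution the cross with the region $x_1>-x_0$ is actually vacuous once you use $f\ge f_2$, so four of your six cases suffice there.
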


\begin{proof}
	
	Note that unlike in the stability result in \cite{FifeMcLeod}, it is not sufficient for the solution of \eqref{DiffEqu} to once be close to a traveling wave in order to stay close indefinitely. The reason is that the tail of the wave will always lie in a region where $f(x,\cdot) = f_1(\cdot)$ and will therefore introduce a disturbance that enters in the form of a possible shift. But since this possible shift is integrable, we can make sure that we do not get driven too far from $\phi_2(x_1+c_2t + \beta) $ if we start late enough and thereby do not accumulate too much of the disturbance. 
	
	Let us now turn to the formalities of the proof. It consists of revisiting the Lemmata \ref{Lemma_subsolution} and \ref{Lemma_supersolution_2}. If $t_0$ is large enough such that only the tail of $\phi_2(x_1+c_2+\beta)$ lies right of $x_1=-x_0$, i.e. $T<t_0$, we know that
	\begin{align}
	u(t,x) \leq \phi_2(x_1+c_2 t + \beta + V_2^+(t)) + v_2^+(t)
	\end{align}	
	where 
	\begin{align}
	v_2^+(t) = (\epsilon+C(t_0)) e^{-\omega(t-t_0)} - C(t_0) e^{-\lambda c_2(t-t_0)} \leq \epsilon +2 C(t_0)
	\end{align}
	and $C(t_0) \searrow 0$ as $t_0 \nearrow + \infty$.
	\begin{align}
	V_2^+(t) = C \int \limits_{t_0}^t v_2^+(\tau) \dx{\tau} \leq C \left (  \frac{\epsilon}{\omega} +C(t_0) \left ( \frac{1}{\omega} + \frac{1}{\lambda c_2}   \right )   \right) = C (\epsilon+ C(t_0))
	\end{align}
	
	Therefore we know that 
	\begin{align}
	u(t,x) - \phi_2(x_1+c_2+\beta) &\leq \phi_2(x_1+c_2 t +\beta +V_2^+(t)) - \phi_2(x_1+c_2 t+\beta) +v_2^+(t)  \\
	&\leq \| \phi_2'   \|_{L^\infty} V_2^+(t) + v_2^+(t) = C(\epsilon + C(t_0))
	\end{align}
	
	From Lemma \ref{Lemma_subsolution} we know that 
	\begin{align}
	u(t,x) &\geq \phi_2(x_1+c_2 t +\beta +V_2^-(t)-V_2^-(t_0)) -v_2^-(t) \\
	&\geq \phi_2(x_1+c_2 t + \beta -V_2^-(t_0)) -v_2^-(t) , 
	\end{align}
	where $0 \leq V_2^-(t_0) \leq C \epsilon$ and $0 \leq v_2^-(t) \leq C \epsilon$ and hence
	\begin{align}
	u(t,x) - \phi_2(x_1+c_2 t + \beta) &\geq \phi_2(x_1+c_2 t + \beta - V_2 ^-(t_0) ) - \phi_2(x_1+c_2 t + \beta) -v_2^-(t) \\
	&\geq -\| \phi_2' \|_{L^\infty} V_2^-(t_0) -v_2^-(t) \geq -C \epsilon .
	\end{align} 
	Summing it all up, we have that
	\begin{align}
	| u(t,x) - \phi_2(x_1+c_2 t + \beta)  | \leq C ( \epsilon + C(t_0)) .
	\end{align}
	Since $C(t_0) \searrow 0$ for $t_0 \nearrow + \infty$ this was to be proven.

\end{proof}

From here on it will be more convenient to work in moving frame coordinates $(z,y)$ where $z = x_1 +c_2t$. In the new coordinates $u$ solves
\begin{align}
\partial_t u + c_2 \partial_z u - \Delta_{z,y} u = f((z-c_2t,y),u)
\end{align}

\begin{Lemma} \label{apriori_bounds_u}
	There is $\sigma >0$ with $\sigma > \frac{|c_2|}{2}$ and $C>0$ such that
	\begin{align}
	|1-u|, |\nabla_{z,y} u|, |D^2_{z,y} u|, |\partial_t u | < C (e^{(\frac{1}{2}c_2-\sigma)z} + e^{-\eta t}) &\quad ,z>0 \\
	|u|, |\nabla_{z,y} u|, |D^2_{z,y} u|, |\partial_t u | < C (e^{(\frac{1}{2}c_2+\sigma)z} + e^{-\eta t}) &\quad ,z<0 
	\end{align}
	(where we always have omitted the arguments $(t,z,y)$.)
\end{Lemma}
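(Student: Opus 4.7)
The plan is to combine the pointwise sandwich from Lemmata \ref{Lemma_subsolution} and \ref{Lemma_supersolution_2} with interior parabolic regularity. In moving frame coordinates $z = x_1 + c_2 t$ these two Lemmata read, for all sufficiently large $t$ and uniformly in $(z,y)$,
\begin{align}
\phi_2(z - \beta^-) - C^- e^{-\omega t} \leq u(t,z,y) \leq \phi_2(z + \beta_2^+) + C_2^+ e^{-\eta t}.
\end{align}

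First I would extract the $L^\infty$-decay from this sandwich. The estimate \eqref{phi_2_asymptotics} already gives $|1 - \phi_2(z)| \leq C e^{-\lambda z}$ for $z \geq 0$ with $\lambda = (\sqrt{c_2^2 - 4 f_2'(1)} - c_2)/2 > 0$; linearising \eqref{ODE_travelling_wave} around the other rest state $0$ analogously produces $0 \leq \phi_2(z) \leq C e^{\mu z}$ for $z \leq 0$ with $\mu = (c_2 + \sqrt{c_2^2 - 4 f_2'(0)})/2$, and $\mu > c_2$ is forced by $f_2'(0) < 0$. The interval $(c_2/2,\, \min\{c_2/2 + \lambda,\, \mu - c_2/2\})$ is therefore non-empty; picking any $\sigma$ inside it and any $\eta$ below both the $\omega$ of Lemma \ref{Lemma_subsolution} and the $\eta$ of Lemma \ref{Lemma_supersolution_2} immediately turns the sandwich into the desired $L^\infty$-bound for $|1-u|$ when $z > 0$ and for $|u|$ when $z < 0$.

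Next I would promote this $L^\infty$-decay to the derivatives by a standard interior parabolic regularity argument. Fix $(t_0, z_0, y_0)$ with $t_0 \geq T+1$ and consider the unit parabolic cylinder $Q_1 := (t_0-1, t_0) \times B_1(z_0, y_0)$. For $z_0 \geq 1$ set $w := 1-u$; since $f_1(1) = f_2(1) = 0$ the sandwich \eqref{bounds_on_f} forces $f(x,1) \equiv 0$, so $|f(x,1-w)| \leq C|w|$, and in moving frame coordinates $w$ solves $\partial_t w + c_2 \partial_z w - \Delta_{z,y} w = g$ with $|g| \leq C|w|$ and homogeneous Neumann data on $\partial D$. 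The Schauder estimate
\begin{align}
\|w\|_{C^{1+\alpha/2,\, 2+\alpha}(Q_{1/2})} \leq C \|w\|_{L^\infty(Q_1)}
\end{align}
(interior, or its boundary version when $(z_0, y_0)$ is near $\partial D$, which is legitimate since $\partial \Omega$ is smooth) transports the pointwise bound on $w$ to $\nabla_{z,y} w$, $D^2_{z,y} w$ and $\partial_t w = -\partial_t u$ at the centre of $Q_1$. The case $z_0 \leq -1$ is symmetric with $w := u$, and for $|z_0| \leq 1$ the spatial exponential factor is of order one and a uniform Schauder bound suffices.

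The main technical hurdle is the bookkeeping of the exponential scales: one has to choose a single pair $(\sigma, \eta)$ simultaneously compatible with the $\phi_2$-asymptotics at $\pm\infty$, with the decay rates coming from Lemmata \ref{Lemma_subsolution} and \ref{Lemma_supersolution_2}, and with the constants absorbed when passing from $Q_1$ to its centre. A minor subtlety is that classical Schauder estimates require some Hölder regularity of $f$ in $x$, which \eqref{F_1}--\eqref{F_8} do not explicitly supply; the $L^p$-version of the parabolic interior estimate together with Sobolev embedding circumvents this, since the statement only involves pointwise bounds on derivatives up to order two.
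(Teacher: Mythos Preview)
Your proposal is correct and follows essentially the same route as the paper: combine the sandwich from Lemmata~\ref{Lemma_subsolution} and~\ref{Lemma_supersolution_2} with the exponential asymptotics of $\phi_2$ at $\pm\infty$ to get the zeroth-order bound, then invoke parabolic Schauder theory (the paper simply cites Friedman) for the derivative estimates. Your localisation in unit parabolic cylinders and the remark about the missing H\"older regularity of $f$ in $x$ (remedied via $L^p$-theory and embedding) are details the paper glosses over, but the underlying strategy is identical.
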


\begin{proof}	
	We are following the proof of Lemma 4.3 in \cite{FifeMcLeod}.
	It is well known (/can be seen by linearizations around $1$ and $0$) that the wave-front $\phi_2$ approaches $1$ and $0$ exponentially.
	E.g. the linearisation around  $\phi_2 =1$ shows that $\phi_2(z) \rightarrow 1$ for $z \rightarrow +\infty$ with approximately the rate
	\begin{align}
	\exp \left ( \frac{1}{2} \left ( c_2 - \sqrt{c_2^2-4 f_2'(1)} \right )z \right)
	\end{align}
	For $ z \rightarrow - \infty$ one gets a similar result.
	Together with Lemmata \ref{Lemma_subsolution} and \ref{Lemma_supersolution_2} we find:
	\begin{align} \label{bound_zeroth_order}
	\begin{split}
	|u(t,z,y) | &\leq \phi_2(z+\beta_2^+) +C_2^+ e^{-\eta t}  \\
	&\leq C \left ( \exp \left ( \left ( \frac{1}{2} c_2 + \sigma \right ) z \right ) + e^{-\eta t} \right ) \text{ for } z <0 \text{ and } \\
	|1- u(t,z,y) | &\leq 1 - \left (   \phi_2  ( z-\beta^- ) - C^- e^{-\omega t}         \right ) \\
	&\leq C \left ( \exp \left ( \left ( \frac{1}{2} c_2 - \sigma \right ) z \right ) + e^{-\omega t} \right ) \text{ for } z >0
	\end{split}
	\end{align}
	
	Since $f$ is Lipschitz, there is $L>0$ such that 
	\begin{align}
	|f(x,u)| \leq L |u| \text{ and } |f(x,u) | \leq L |1-u| \text{ for } u \in [0,1] \text{ and all } x \in D.
	\end{align}
	This together with \eqref{bound_zeroth_order} implies
	\begin{align}
	|f((z,y),u(t,z,y))| \leq C \left (   \exp \left (    \frac{1}{2} c_2 z - \sigma |z|                                       \right )      +e^{-\eta t}                                  \right ) .
	\end{align}
	For the higher order estimates we employ Schauder Theory (e.g \cite{friedman2008partial} Thm 5 Chap 3 and Thm 4 in Chap 7 for the a-priori bound on the Hölder-norm of $f(u)$).  Hence it does also hold:
	\begin{align}
	|\nabla_{z,y} u | , |D^2_{z,y} u | , |\partial_t u| \leq C \left (   \exp \left (    \frac{1}{2} c_2 z - \sigma |z|                                       \right )      +e^{-\eta t}                                  \right ) .
	\end{align}

\end{proof}

Now we have everything in place to proof Theorem \ref{Thm_connection_two_traveling_fronts}.

\begin{proof}[Proof of Theorem \ref{Thm_connection_two_traveling_fronts}]
	
	For the identification of the limit equation in the moving frame we will use a multidimensional analogon of a Lyapunov function argument given in \cite{FifeMcLeod}. Lyapunov functions are a well known and very helpful tool for investigating the long-term behaviour of parabolic partial differential equations (see e.g. \cite{Eberle2017142}).
	
	For the sake of completeness we will repeat the arguments given in \cite{FifeMcLeod} and add the slight modifications we had to make.
	Let us define the Lyapunov function as 
	\begin{align}
	\cL[u](t) := \int \limits_D e^{-c_2 z} \left (  \frac{1}{2} | \nabla_{z,y} u |^2 -F(u) +H(z) F(1)     \right ) \dx{z} \dx{y} ,
	\end{align}
	where $F(s) := \int_0^s f_2(\sigma) \dx{\sigma}$ and $H$ is the heaviside-function.
	

	To ensure integrability in the definition of $\cL$ we cut $u$ off as follows
	\begin{align}
	w(t,z,y) &= u(t,z,y) &&\text{ for } |z| \leq m t, \\
	w(t,z,y) &= 0 &&\text{ for } z \leq -m t -1, \\
	w(t,z,y) &=1 &&\text{ for } z \geq m t +1,
	\end{align}
	for some $m >0$ to be specified later. And we assume $w$ to be smoothed out in a manner such that Lemma \ref{apriori_bounds_u} still holds for $w$.

	Employing Lemma \ref{apriori_bounds_u} we find that
	\begin{align}
	|\cL  [w]| \leq C \int \limits_\Omega \int \limits_{-m t -1}^{m t +1} e^{-c_2z} \left ( e^{c_2 z -2 \sigma |z| } + e^{-2\eta t} \right )  \dx{z} \dx{y}
	\end{align}
	which is uniformly bounded for all $t>0$ if  $m>0$ is chosen such that $c_2m -2\eta <0$. Let us choose $m$ such that $m < \frac{1}{2} \min \left \{ \frac{2 \eta}{c_2},c_2   \right \}$.
	
	Using integration by parts it follows
	\begin{align}
	\dot{\cL}[w](t) = - \int \limits_D e^{-c_2z} \left (  -c_2 \partial_z w + \Delta_{z,y} w +f_2(w)       \right ) \partial_t w  ~  \dx{z} \dx{y} .
	\end{align}
	
	Unfortunately, $w$ does not solve $\partial_t w = -c_2 \partial_z w + \Delta_{z,y} w + f_2(w)$ and we do not get a sign for $\dot{\cL}$. This is why we try to control the error against
	\begin{align}
	Q[w] = \int \limits_D e^{-c_2 z}  \left (  \Delta_ {z,y} w -c_2 \partial_z w +f_2(w)  \right )^2  \dx{z} \dx{y},
	\end{align}
	i.e.
	\begin{align} \label{error_derivative_Lyapunov_function}
	&\dot{\cL}[w](t) + Q[w](t) = \\ &- \int \limits_D e^{-c_2 z} \left (  - c_2 \partial_z w + \Delta_{z,y} w +f_2(w)   \right ) \left (  \partial_t w - \Delta_{z,y} w + c_2 \partial_z w -f_2(w)   \right )  \dx{z} \dx{y}
	\end{align}
	
	Note that for $|z| \leq mt$ $w$ solves
	\begin{align*}
	\partial_t w - \Delta_{z,y} w +c_2 \partial_z w -f_2(w) = f(z-c_2t,w) -f_2(w)
	\end{align*}
	and that $ f((z-c_2t,y),w) = f_2(w)$  if $ t\geq \frac{1+x_0}{c_2-m}$ in $\set {|z| \leq mt}$ .

	For $t \geq \frac{1+x_0}{c_2-m}$ the last factor in the integral in \eqref{error_derivative_Lyapunov_function} vanishes in $\set {|z| \leq mt}$ and for $  \set {|z| \in (mt,mt+1] }$ we can use the growth estimates from Lemma \ref{apriori_bounds_u}. With all that we can conclude that
	\begin{align}
	\lim \limits_{t \rightarrow \infty} | \dot{\cL}[w](t) +Q[w](t)| =0 .
	\end{align}
	Since $Q[w] \geq 0$ this implies that
	\begin{align}
	\limsup \limits_{t \rightarrow \infty} \dot{\cL}[w](t) \leq 0 .
	\end{align}
	Hence there must be a subsequence $(t_n)_{n \in \N}$, $t_n \rightarrow \infty$ for $n \rightarrow \infty$ such that
	\begin{align}
	\lim \limits_{n \rightarrow \infty} \dot{\cL}[w](t_n) =0
	\end{align}
	because otherwise $\cL[w]$ could not be uniformly bounded in $t$.
	Therefore it must hold along that subsequence
	\begin{align} \label{limit_of_Q_along_sequence}
	\lim \limits_{n \rightarrow \infty} Q[w](t_n) =0 .
	\end{align}
	
	By Lemma \ref{apriori_bounds_u} and an Arzela-Ascoli argument for a further subsequence (again denoted by $(t_n)_{n \in \N}$) there is a function $u_\infty$ such that:
	\begin{align}
	u(\cdot, t_n) \rightarrow u_\infty(\cdot) \quad \text{ for } n \rightarrow \infty \text{ in } C^2(D), \\
	w(\cdot, t_n) \rightarrow u_\infty(\cdot) \quad \text{ for } n \rightarrow \infty \text{ in } C^2(D) . 
	\end{align}
	Therefore  since $Q\geq 0$ and \eqref{limit_of_Q_along_sequence} for any finite interval $I \subset \R$:
	\begin{align}
	0 &\leftarrow \left (  \int \limits_{I \times \Omega}  e^{-c_2 z} \left ( \Delta_{z,y}w -c_2 \partial_z w +f_2(w) \right )^2  \dx{z} \dx{y}    \right )(t_n) \\
	&\rightarrow \int \limits_{I \times \Omega} e^{-c_2 z} \left (  \Delta_{z,y} u_\infty -c_2 \partial_z u_\infty +f_2(u_\infty)  \right )^2  \dx{z} \dx{y}
	\end{align}
	as $n \rightarrow \infty$. Hence $u_\infty$ solves
	\begin{align}
	\Delta_{z,y} u_\infty -c_2 \partial_z u_\infty + f_2(u_\infty) =0 \quad \text{ a.e. in } D \text{ and } \\
	\lim \limits_{z \rightarrow -\infty} u_\infty(z,y) = 0 , \lim  \limits_{z \rightarrow \infty} u_\infty(z,y) =1
	\end{align}
	
	By uniqueness of traveling fronts up to translation in $z$ (see e.g. \cite{BerestyckiNirenberg} Thm 7.1), there is $\beta \in \R$ such that
	\begin{align}
	u_\infty(z,y) = \phi_2(z+\beta) .
	\end{align}
	
	Now the stability result \ref{lemma_stability} implies that 
	\begin{align}
	u(t,z,y) \rightarrow \phi_2(z+ \beta) \text{ uniformly in } D \text{ as } t \rightarrow +\infty ,
	\end{align}
	not only for the special subsequence $(t_n)_{n \in \N}$.
	This was to be proven.
	
\end{proof}

\appendix

\section{Proof of Theorem \ref{existence_uniqueness}}

\begin{proof}[Proof of Theorem \ref{existence_uniqueness}]
	We follow the proof of existence and uniqueness given in \cite{MatanoObst}. See Theorem 2.1 therein. For the sake of completeness, we repeat it here with slight adaptations to our setting.
	
	We define the candidates for the super- and subsolution before the wave front encounters the transition zone at $x_1 \leq 0$ as
	
	\begin{align}
	w^+(t,x_1) = 
	\begin{cases}
	\min \{\phi_1(x_1+c_1 t + \xi(t) ) + \phi_1(-x_1 +c_1 t + \xi(t)),1\} &, x_1 \geq 0 \\
	\min \{  2 \phi_1(c_1+\xi(t)),1      \}  &, x_1 <0
	\end{cases}
	\end{align}
	
	and our candidate for the subsolution shall be given as
	\begin{align}
	w^-(t,x_1) = 
	\begin{cases}
	\max \{\phi_1(x_1+c_1 t - \xi(t) ) - \phi_1(-x_1 +c_1 t - \xi(t)),0\} &, x_1 \geq 0 \\
	0  &, x_1 <0
	\end{cases}
	\end{align}
	Here $\phi_i$ are solutions of \eqref{ODE_travelling_wave} normalized such that $\phi_i(0) = \theta_i$.
	
	In this definition $\xi(t)$ is the solution of the ordinary differential equation
	\begin{align} \label{Appendix:equ:xi}
	\dot{\xi}(t) = M e^{\lambda (c_1+ \xi)} , t <-T , \quad \xi(-\infty)=0,
	\end{align}
	where $M$ and $T$ will be chosen later,  $\lambda$ is the positive root of
	\begin{align} \label{Appendix:equ:lambda}
	\lambda^2-c_1 \lambda+f_1'(0) =0 \text{ i.e. } \lambda = \left (  c_1+\sqrt{ c_1^2-4 f_1^2(0)  }   \right)
	\end{align}
	and
	\begin{align}
	\xi(t) = \frac{1}{\lambda} \log \left (   \frac{1}{1-c_1^{-1}M e^{\lambda c_1 t}}     \right) .
	\end{align}
	In order for this to be defined, we will need that 
	\begin{align}
	1-c_1^{-1}M e^{\lambda c_1 t} >0.
	\end{align}
	We also want that
	\begin{align}
	c_1 t + \xi(t) \leq 0 \text{ for  } -\infty <t \leq T
	\end{align}
	and therefore we set
	\begin{align}
	T:= \frac{1}{\lambda c_1} \log \left ( \frac{c_1}{c_1+M}   \right) 
	\end{align}
	
	We will now show for $M >0$ sufficiently large and a $T_1 \in (-\infty,T]$ that $w^+$ will be a super- and $w^-$ a subsolution of \eqref{DiffEqu} for $-\infty <t \leq T_1$.
	
	It is classical (see e.g. \cite{FifeMcLeod}) that there are positive constants $\alpha_0, \alpha_1, \beta_0, \beta_1$ such that
	\begin{align}
	\alpha_0 e^{\lambda z} &\leq \phi_1(z) \leq \beta_0 e^{\lambda z}  &, z \leq 0,  \label{Appendix:bound:phi1:zneg}\\
	\alpha_1 e^{-\mu z} &\leq 1- \phi_1(z) \leq \beta_1 e^{-\mu z} &, z >0 ,  \label{Appendix:bound:phi1:zpos}
	\end{align}
	where $\lambda$ is as in \eqref{Appendix:equ:lambda} and $\mu$ is given by
	\begin{align}
	\mu = \frac{1}{2} \left ( -c_1 + \sqrt{c_1^2-4 f_1'(1)}     \right) .
	\end{align}
	For the derivative $\phi_1'$ we have the same exponential behaviour
	\begin{align}
	\gamma_0 e^{\lambda z} &\leq \phi_1'(z) \leq \delta_0 e^{\lambda z} &, z \leq 0  , \label{Appendix:bound:phi1prime:zneg} \\
	\gamma_1 e^{-\mu z} &\leq \phi_1'(z) \leq \delta_1 e^{-\mu z} &, z >0 .   \label{Appendix:bound:phi1prime:zpos}
	\end{align}
	Furthermore, since $f_1$ was assumed to be $C^{1,1}$ we  have $L >0 $ such that
	\begin{align} \label{Appendix:C11bound}
	|f_1(u+v)-f_1(u)-f_1(v)| \leq L uv    \quad \text{ for } 0 \leq u,v \leq 1.
	\end{align}

	First of all, since $w^+$ and $w^-$ are independent of the lateral directions we see that
	\begin{align}
	\nabla w^+ \cdot \nu = \nabla w^- \cdot \nu =0 \text{ on } \partial D.
	\end{align}
	
	Since it suffices to check that $w^+$ is supersolution on $\{w^+ <1\}$ and $w^-$ is subsolution on $\{w^- >0\}$ we will restrict (without always mentioning it) us in the following to these sets. (Since $\max \{\cdot , \cdot \}$ of subsolutions is a subsolution and $\min \{\cdot, \cdot \}$ of supersolutions is a supersolution.)
	A calculation shows that
	\begin{align}
	\sL w^+ &= 
	\begin{cases}
	2(c_1+\dot{\xi}) \phi_1'(c_1+\xi(t)) -f(x,2\phi_1(c_1t + \xi(t)))    &  ,x_1 <0 \\
	\dot{\xi}(t) (\phi_1'(z_+)+\phi_1'(z_-)) + f_1(\phi_1(z_+)) + f_1(\phi_1(z_-)) 
	\\ -f(x,\phi_1(z_+)+\phi_1(z_-))   &  , x_1 >0
	\end{cases}\\
	&\geq
	\begin{cases}
	2(c_1+\dot{\xi}) \phi_1'(c_1+\xi(t)) -f_1(2\phi_1(c_1t + \xi(t)))     &,x_1 <0 \\
	\dot{\xi}(t) (\phi_1'(z_+)+\phi_1'(z_-)) + G(t,x_1)    &, x_1 >0
	\end{cases}
	\end{align}
	where $z_ +:= x_1 +c_1 t + \xi(t)$, $z_- := -x_1+c_1 t + \xi(t)$ and 
	\begin{align}
	G(t,x_1) = f_1(\phi_1(z_+)) + f_1(\phi_1(z_-)) -f_1(\phi_1(z_+)+\phi_1(z_-)) .
	\end{align}
	
	Using \eqref{Appendix:equ:xi} this can be rewritten as
	\begin{align}
	\sL w^+ \geq
	\begin{cases}
	2(c_1+M e^{\lambda (c_1 t + \xi(t))}) \phi_1'(c_1 t +\xi(t)) -f_1(2\phi_1(c_1t + \xi(t)))     &,x_1 <0 , \\
	M e^{\lambda (c_1 t + \xi(t))} (\phi_1'(z_+)+\phi_1'(z_-)) + G(t,x_1)    &, x_1 >0 .
	\end{cases}
	\end{align}
	
	Since $w^+$ is $C^2$ for $x_1 \neq 0$ and $C^1$ for all $x_1 \in \R$, therefore it suffices to check $\sL w^+ \geq 0$ on $x_1 >0$ and $x_1 <0$.
	
	On $x_1 <0$ we have that $\sL w^+ >0$ if we choose $T_1 \in (-\infty,T]$ sufficiently negative such that
	\begin{align} \label{Appendix:w+Supersolution_condition1}
	\phi_1(c_1 t + \xi(t)) \leq \frac{\theta_1}{2}  \text{ for } -\infty <t \leq T_1,
	\end{align}
	where $\theta_1$ is as in \eqref{F_4}.

	On $0 < x_1 \leq -(c_1+\xi(t))$ we can conclude from \eqref{Appendix:bound:phi1:zneg}, \eqref{Appendix:bound:phi1prime:zneg}, \eqref{Appendix:C11bound} that
	\begin{align}
	\sL w^+ &\geq M e^{\lambda (c_1 t + \xi)} (\phi_1'(z_+)+ \phi_1'(z_-)) - L\phi_1(z_+) \phi_1(z_-) \\
	&= M \gamma_0 e^{\lambda ( c_1 t + \xi)}  e^{\lambda (x_1+c_1 t+ \xi)} - L \beta_0^2 e^{\lambda (x_1 +c_1 t + \xi  )} e^{\lambda(-x_1 + c_1 t + \xi)} \\
	&= e^{2 \lambda (c_1 t + \xi)} (M \gamma_0 e^{\lambda x_1}- L \beta_0^2) .
	\end{align}
	Therefore if we choose $M >0$ such that
	\begin{align}\label{Appendix:w+Supersolution_condition2}
	M \gamma_0 > L \beta_0^2
	\end{align}
	we get $\sL w^+>0$ in this case.
	
	It  remains to show $\sL w^+ >0$ on $x_1 >-(c_1 t + \xi(t))$. Observe that
	
	\begin{align} \label{Appendix:Lw+Estim_second_case}
	\begin{split}
	\sL w^+ &\geq M e^{\lambda (c_1 t + \xi)} (\phi_1'(z_+) + \phi_1'(z_-)) - L \phi_1(z_+) \phi_1(z_-) \\
	&\geq M \gamma_1  e^{\lambda (c_1 t + \xi)} e^{- \mu ( x_1 + c_1 t + \xi)} - L \beta_0 e^{\lambda (-x_1 + c_1 t + \xi)} \\
	&\geq e^{\lambda (c_1 t + \xi)} ( M \gamma_1 e^{- \mu (x_1+c_1 t + \xi)} - L \beta_0 e^{- \lambda x_1}) .
	\end{split}
	\end{align}
	
	First in the subcase $\lambda \geq \mu$ we have $\sL w^+ >0$ provided that 
	\begin{align} \label{Appendix:w+Supersolution_condition3}
	M \gamma_1 > L \beta_0.
	\end{align}
	
	In the subcase $\lambda < \mu$ we have
	\begin{align}
	m_0 := -f_1'(0) < -f_1'(1) =: m_1. 
	\end{align}
	
	In this case it holds
	\begin{align} \label{Appendix:approx:estim_of_f_C11}
	f_1(u) +f_1(v) - f_1(u+v) = (m_1-m_0) v + O(v^2) + O(|v(1-u)|)
	\end{align}
	for $u \approx 1$ and $v \approx 0$.
	Hence $G(t, x_1)\geq 0$ if $z_+$ is very large and $z_-$ is very negative. This means that there is exists a constant $L_1 >0$ such that
	\begin{align}
	\sL w^+ \geq 0 \text{ if } x_1 \in [-(c_1 t + \xi(t)) + L_1 , \infty),
	\end{align}
	where we used that $c_1 t + \xi(t) \leq 0$.
	
	In the remaining subcase $x_1 \in [-(c_1 t + \xi(t)), -(c_1 t + \xi(t))+L_1 ]$, we see from \eqref{Appendix:Lw+Estim_second_case} that
	\begin{align}
	\sL w^+ &\geq e^{\lambda (c_1 t + \xi)} (M \gamma_1 e^{- \mu (x_1 + c_1 t + \xi)}) - L \beta_0 e^{-\lambda x_1 } ) \\
	&\geq e^{\lambda (c_1 t + \xi)} ( M \gamma_1 e^{-\mu L_1} - L \beta_0 e^{-\lambda x_1}   )
	\end{align}
	So in this subcase $\sL w^+>0 $ does hold if 
	\begin{align} \label{Appendix:w+Supersolution_condition4}
	M \gamma_1 e^{-\mu L_1} > L \beta_0.
	\end{align}
	Choosing $M>0$ and $T_1 \in (-\infty, T]$ such that \eqref{Appendix:w+Supersolution_condition1}, \eqref{Appendix:w+Supersolution_condition2}, \eqref{Appendix:w+Supersolution_condition3} and \eqref{Appendix:w+Supersolution_condition4} are fulfilled, we have constructed a supersolution $w^+$.

	Now we show that $w^-$ is a subsolution in the range ${w^- >0}$.

	A direct computation shows that it holds
	\begin{align}
	\sL w^- = 
	\begin{cases}
	0 &, x_1 <0 \\
	-M e^{\lambda (c_1 t + \xi)} (\phi_1'(y_+) - \phi_1'(y_-)) +H(t,x_1)         &, x_1 >0
	\end{cases} ,
	\end{align}
	where $y_+ := x_1 +c_1 t - \xi(t)$, $y_- := -x_1 + c_1 t - \xi(t)   $ and 
	\begin{align}
	H(t, x_1) = f_1(\phi_1(y_+)) - f_1(\phi_1(y_-)) - f_1(\phi_1(y_+)- \phi_1(y_-)) . 
	\end{align}
	Recall that on $x_1 >0$ it holds that $f(x,u) = f_1(u)$.

	Note that $w^- $ is $C^2$ except at $x_1 =0$ and $w^-$ has positive derivative gap at $x_1 =0$.  Therefore, in order to show that $w^-$ is a subsolution, it suffices to check that $\sL w^- \leq 0$ both for $x_1 <0$ and $x_1 >0$. 
	For $x_1 <0$ nothing needs to be checked. 
	
	In the range $0< x_1 \leq -(c_1- \xi(t))$, we note that 
	\begin{align}
	\phi_1'(y_+) - \phi_1'(y_-) = \int \limits_{y_-}^{y_+} \phi_1''(z) \dx{z} = \int \limits_{y_-}^{y_+} (c_1 \phi_1'(z)-f_1(\phi_1(z))) \dx{z} .
	\end{align}
	Since $y_- < y_+ <0$ in this range we have $\phi_1(z) < \theta_1$ for $z \in [y_-,y_+]$ by the normalization of $\phi_1$.
	By \eqref{F_4} it follows $f_1(\phi_1(z)) \leq 0$ in this range and hence
	\begin{align}
	\phi_1'(y_+) - \phi_1'(y_-) \geq c_1 (\phi_1(y_+)-\phi_1(y_-)) .
	\end{align}
	By \eqref{Appendix:C11bound} we have
	\begin{align}
	|H(t,x_1)| \leq L \phi_1(y_-) (\phi_1(y_+)-\phi_1(y_-))
	\end{align}
	Combining these, we obtain
	\begin{align}
	\sL w^- &\leq -c_1 M e^{\lambda (c_1 t + \xi)} (\phi_1(y_+) - \phi_1(y_-) ) + L \phi_1(y_-) (\phi_1(y_+) - \phi_1(y_-) ) \\
	&\leq (-c_1 M e^{\lambda (c_1 t + \xi)} + L \phi_1(y_-)) (\phi_1(y_+)- \phi_1(y_-)) \\
	&\leq (-c_1 M e^{\lambda (c_1 t + \xi) }  +L\beta_0 e^{\lambda(-x_1 + c_1 t - \xi(t) )}  ) (\phi_1(y_+) - \phi_1(y_-)) \\
	&\leq e^{\lambda c_1 t} (-c_1 M e^{\lambda \xi}+ L \beta_0 e^{\lambda (-x_1 - \xi(t))} ) ( \phi_1(y_+)- \phi_1(y_-)) .
	\end{align}
	Therefore $\sL w^- <0$ in this range provided that
	\begin{align} \label{Appendix:Lw-_bigger_zero_condition_1}
	c_1 M > L \beta_0 . 
	\end{align}
	Now we show that $\sL w^- <0$ in the range $x_1 > -(c_1-\xi(t))$ of $\{w^- >0\}$. Recall that $x_1 \geq 0\geq c_1 t -\xi(t)$.
	First we consider the subcase $\lambda \geq \mu$.
	It does then hold that 
	\begin{align}
	\sL w^- &\leq -M e^{\lambda (c_1 t + \xi)} (\phi_1'(y_+)-\phi_1'(y_-)) + L \phi_1(y_-) (\phi_1(y_+)- \phi_1(y_-)) \\
	&\leq  -M e^{\lambda (c_1 t + \xi)} (\gamma_1 e^{-\mu(x_1 +c_1 t -\xi)} -\delta_0 e^{\lambda (-x_1+c_1 t -\xi)}) + L \beta_0 e^{\lambda (-x_1 +c_1 t -\xi)} \\
	&= -M e^{\lambda(-x_1 + c_1 t + \xi)} (\gamma_1 e^{-\mu ( c_1 t - \xi )+(\lambda - \mu)x_1} - \delta_0 e^{\lambda (c_1 t -\xi)}- M^{-1}L \beta_0 e^{-2\lambda \xi}) \label{Appendix:w-_lambda_bigger_mu_estimate_on_L} \\
	&\leq M e^{\lambda (-x_1 +c_1 t + \xi )} (\gamma_1 e^{-\mu (c_1 t - \xi)}-\delta_0 e^{\lambda (c_1 t - \xi)}- M^{-1} L \beta_0) .
	\end{align}
	Thus choosing $T_1 \in (-\infty,T]$ sufficiently negative such that
	\begin{align} \label{Appendix:Lw-_bigger_zero_condition_2}
	\gamma_1 e^{-\mu ( c_1 t -\xi)} - \delta_0 e^{\lambda (c_1 t -\xi)} -M^{-1} L \beta_0 >0
	\end{align}
	we have that $\sL w^- <0$.
	
	In the case $\lambda < \mu$ as in \eqref{Appendix:approx:estim_of_f_C11} we find that
	\begin{align}
	H(t,x_1) = -(m_1-m_0) \phi_1(y_-) + O(\phi_1^2(y_-)) +O(\phi_1(y_-)(1-\phi_1(y_+))) \leq -\tilde{C} \phi_1(y_-)
	\end{align}
	for some constant $\tilde{C}>0$ since in the case $\lambda < \mu$ we have that $m_1 >m_0$, in the realm where $y_+$ is big enough and positive and $y_-$ is sufficiently negative.
	Consequently there exists $L_2 >0$ such that 
	\begin{align}
	H(t,x_1) \leq - \tilde{C} \beta_0  e^{\lambda (-x_1 +c_1 t - \xi)} \text{ if } x_1 \in [-(c_1 t -\xi(t))+L_2, \infty).
	\end{align}
	Therefore 
	\begin{align}
	\sL w^- \leq M e^{\lambda (c_1 t + \xi)} \phi_1'(y_-) + H(t,x_1) \leq e^{\lambda (-x_1 + c_1 t -\xi)} (M \delta_0 e^{\lambda (c_1 t +\xi)}-\tilde{C}\beta_0 ).
	\end{align}
	It follows that again $\sL w^-<0$ given that $T_1 \in (-\infty,T]$ is chosen sufficiently negative so that 
	\begin{align} \label{Appendix:Lw-_bigger_zero_condition_3}
	M \delta_0 e^{\lambda (c_1 t + \xi)} < \tilde{C} \beta_0 \text{ for } -\infty < t \leq T_1.
	\end{align}
	Finally in the range $x_1 \in [-(c_1 t - \xi(t)), -(c_1 t - \xi(t))+ L_2]$, we see from \eqref{Appendix:w-_lambda_bigger_mu_estimate_on_L} that
	\begin{align}
	\sL w^- \leq - M e^{\lambda (-x_1 + c_1 t + \xi)} (\gamma_1 e^{-\lambda (c_1 t - \xi)- (\mu - \lambda ) L_2}  - \delta_0 e^{\lambda (c_1 t - \xi)} -M^{-1} L \beta_0 e^{-2 \lambda \xi} )
	\end{align}
	Again we have $\sL w^- <0$ in this range provided that $T_1 \in (-\infty, T]$ is chosen sufficiently negative such that
	\begin{align} \label{Appendix:Lw-_bigger_zero_condition_4}
	\gamma_1 e^{-\lambda (c_1 t - \xi ) - (\mu -\lambda) L_2} - \delta_0  e^{\lambda (c_1 t - \xi)} -M^{-1} L \beta_0 >0 \text{ for } - \infty <t \leq T_1 .
	\end{align}
	Combining these we see that $w^-$ is a subsolution of \eqref{DiffEqu}, if $M>0$ and $T_1 \in (-\infty, T]$ are chosen so that \eqref{Appendix:Lw-_bigger_zero_condition_1}, 	\eqref{Appendix:Lw-_bigger_zero_condition_2}, \eqref{Appendix:Lw-_bigger_zero_condition_3}, \eqref{Appendix:Lw-_bigger_zero_condition_4} hold.

	In following we will use the sub-and supersolutions $w^-$ and $w^+$ to construct the entire solution.  
	In order to also ensure that the constructed entire solution will also be strictly increasing in time, we replace the argument given in \cite{MatanoObst} by an argument kindly pointed out to the author by François Hamel. 
	
	Let us therefore define the non-decreasing (in time) modification 
	\begin{align}
		\tw^-(t,x) := \sup \limits_{s<0} w^-(t+s,x) ~,
	\end{align}
	of the subsolution $w^-$ that still is a subsolution (in the viscosity sense).
	
		Let us now construct a sequence of solutions $u_n$ of \eqref{DiffEqu} defined for $-n \leq t < \infty$ with initial condition
		\begin{align}
		u_n(-n,x) = w^- (-n,x).
		\end{align}

		Observe that by construction $w^- \leq w^+$ and $\partial_t w^+ <0$ for $t$ sufficiently negative. Since $\tw^-$ is a subsolution we get
		\begin{align}
	u_n(-n,x) = \tw^-(-n,x) \leq \sup \limits_{s<0} w^+(-n+s,x) \leq w^+(-n,x) 
	\end{align}
	for $n$ large enough and all $x \in D$. Let in the following always $n$ be large enough.
	Since $\tw^-$ is a subsolution and $w^+$ is a supersolution we get that
	\begin{align}
		\tw^-(t,x) \leq u_n(t,x) \leq w^+(t,x) \text{ for all } t \in [-n,T_1] .
	\end{align}
	From this follows directly (setting $t=-(n-1)$) that $u_n$ is non-decreasing in $n$:
	\begin{align}
		u_n(-n+1,x) \geq \tw^-(-n+1,x) = u_{n-1}(-n+1,x) \text{ for all } x \in D.
	\end{align}
	Then by the comparison principle again
	\begin{align}
		u_n(t,x) \geq u_{n-1} (t,x) \text{ for all } t \in [-n+1,T_1] \text{ and } x \in D.
	\end{align}

	Letting $n \rightarrow \infty$ and using parabolic estimates we see that this sequence converges (up to a subsequence) to an entire solution ${u}$ of \eqref{DiffEqu} satisfying
	\begin{align}
		\tilde{w}^-(t,x) \leq {u}(t,x) \leq w^+(t,x) \quad  \text{ for all } (t,x) \in (-\infty, T_1]\times D.
	\end{align}
	
	Let us now show that $\partial_t {u} >0$.
	By construction $\tw^-$ is non-decreasing in time  and hence for all $h>0$ such that $-n+h<T_1$
	\begin{align}
	u_n(-n+h,x) \geq \tw^-(-n+h,x) \geq \tw^-(-n,x) =u_n(-n,x) .
	\end{align}
	This implies directly that
	\begin{align}
		\partial_t u_n(-n,x) \geq 0 \text{ for all } x \in D
	\end{align}
	and hence by the maximum principle
	\begin{align}
		\partial_t u_n(t,x) \geq 0 \text{ for all } t\in[-n,\infty) \text{ and } x \in D.
	\end{align}
	
	Passing to the limit $n \rightarrow \infty$ it follows that 
	\begin{align} 
		\partial_t u(t,x) \geq 0 \text{ for all } t \in \R .
	\end{align}
	But since $u$ being a solution of \eqref{DiffEqu} with initial condition \eqref{Anfangsbedingung} cannot be constant, from the strong maximum principle it follows that
	\begin{align} \label{Appendix:time_derivative_of_solution_is_positive}
		\partial_t u(t,x) > 0 \text{ for all } t \in \R.
	\end{align}

	We are now in the position to turn to uniqueness. Let us start introducing the notion of a transition zone first.
	Given $\eta \in (0,\frac{1}{2} ]$ we define for each $t \in \R$
	\begin{align}
		D_\eta(t) := \set {  x \in D  | \eta \leq {u}(t,x) \leq 1-\eta} .
	\end{align}
	$D_\eta(t)$ can be understood as transition zone of the front $u$ at time $t$. By condition \eqref{Anfangsbedingung} for any $\eta \in (0, \frac{1}{2}]$ we can find $T_\eta \in \R$ and $M_\eta \geq 0$ such that
	\begin{align} \label{Appendix:condition_on_transition_zone}
		D_\eta(t) \subset \set { x \in D | \abs {c_1t +x_1} \leq M_\eta    } \subset \set { x \in D | x_1 \geq 1  }
	\end{align}
	for $-\infty < t \leq T_\eta$. 
	The argument will rely on the following 
	
	\begin{Lemma} \label{Appendix:Lemma:bound_from_below_on_temporal_derivative_on_transition_zone}
		For any $\eta \in (0, \frac{1}{2}]$ there exists $\delta >0$ such that
		\begin{align} \label{Appendix:bound_on_time_derivative_in_transition_zone}
			\partial_t u \geq \delta \quad \text{ for } t \in (-\infty, T_\eta) , x \in D_\eta(t) .
		\end{align} 
	\end{Lemma}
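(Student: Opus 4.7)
The plan is a contradiction argument combined with a parabolic blow-up/translation. Suppose no such $\delta>0$ exists; then there is a sequence $(t_n,x_n)$ with $t_n\le T_\eta$, $x_n=(x_{n,1},y_n)\in D_\eta(t_n)$ and $\partial_t u(t_n,x_n)\to 0$. I shall separate according to whether the times $(t_n)$ remain bounded below or escape to $-\infty$.

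If a subsequence of $(t_n)$ is bounded below in $(-\infty,T_\eta]$, I extract $t_n\to t^\ast\in(-\infty,T_\eta]$. The confinement \eqref{Appendix:condition_on_transition_zone} forces $x_{n,1}$ into a bounded interval, and since $\bar\Omega$ is compact a further extraction yields $x_n\to x^\ast\in\bar D$ with $\eta\le u(t^\ast,x^\ast)\le 1-\eta$. Continuity of $\partial_t u$ then gives $\partial_t u(t^\ast,x^\ast)=0$, contradicting \eqref{Appendix:time_derivative_of_solution_is_positive}.

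In the remaining case $t_n\to -\infty$, \eqref{Appendix:condition_on_transition_zone} yields $\abs{x_{n,1}+c_1 t_n}\le M_\eta$, hence $x_{n,1}\to +\infty$; after extraction $x_{n,1}+c_1 t_n\to z^\ast\in[-M_\eta,M_\eta]$ and $y_n\to y^\ast\in\bar\Omega$. I translate by setting
\begin{align}
v_n(t,s,y):=u(t+t_n,\,s+x_{n,1},\,y),
\end{align}
which on $\R\times\R\times\Omega$ solves $\partial_t v_n-\Delta v_n=f((s+x_{n,1},y),v_n)$ with Neumann conditions on $\R\times\R\times\partial\Omega$. On every fixed compact set in $(s,y)$, for $n$ large one has $s+x_{n,1}\ge 0$, so by \eqref{bounds_on_f} the nonlinearity reduces to $f_1(v_n)$; at $t=0$ the asymptotic \eqref{Anfangsbedingung} gives
\begin{align}
v_n(0,s,y)-\phi_1(s+x_{n,1}+c_1 t_n)\to 0 \text{ uniformly in } (s,y),
\end{align}
so $v_n(0,\cdot,\cdot)\to\phi_1(\cdot+z^\ast)$ locally uniformly.

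Standard parabolic Schauder estimates render $(v_n)$ precompact in $C^{1,2}_{\mathrm{loc}}$, and any cluster point $v$ solves $\partial_t v-\Delta v=f_1(v)$ on $\R\times\R\times\Omega$ with initial datum $\phi_1(\cdot+z^\ast)$; uniqueness for this Cauchy problem pins down $v(t,s,y)=\phi_1(s+z^\ast+c_1 t)$. Consequently
\begin{align}
\partial_t u(t_n,x_n)=\partial_t v_n(0,0,y_n)\ \longrightarrow\ c_1\phi_1'(z^\ast),
\end{align}
which is strictly positive because $\phi_1'>0$ on all of $\R$, contradicting $\partial_t u(t_n,x_n)\to 0$. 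The delicate step is the identification of the limit equation in the tail regime: one must exploit that the shifted nonlinearity $f((\cdot+x_{n,1},\cdot),\cdot)$ coincides locally with $f_1$ once $x_{n,1}$ is large enough, which is exactly the structural hypothesis \eqref{bounds_on_f} on $f$ in the region $x_1\ge 0$.
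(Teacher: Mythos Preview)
Your argument is correct and follows the same contradiction-plus-compactness scheme as the paper. Two minor differences are worth recording. In the bounded-time case you appeal directly to continuity of $\partial_t u$ and the already established strict inequality $\partial_t u>0$ on $\R\times\bar D$; the paper instead translates and invokes the strong maximum principle on $\partial_t u^*$, which is unnecessary here since the limit point lies in the original domain. In the case $t_n\to-\infty$, the paper identifies the limit $u^*$ as a traveling wave directly from \eqref{Anfangsbedingung} (since $t+t_n\to-\infty$ for every fixed $t$) and then uses the strong maximum principle to derive $\partial_t u^*\equiv 0$ from the single zero, obtaining the contradiction. You instead invoke forward uniqueness for the Cauchy problem with datum $\phi_1(\cdot+z^\ast)$; note that this literally determines $v$ only for $t\ge 0$, not on all of $\R$ as your wording suggests, but this is harmless because you only need $\partial_t v(0,0,y^\ast)$. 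A cleaner variant of your route is to observe that \eqref{Anfangsbedingung} already forces $v_n(t,s,y)\to\phi_1(s+z^\ast+c_1 t)$ for every fixed $t$, making the Cauchy-uniqueness step superfluous.
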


\begin{proof}[Proof of the Lemma]
	Suppose that \eqref{Appendix:bound_on_time_derivative_in_transition_zone} does not hold. Then there exists a sequence $(t_k)_{k \in \N} \subset (-\infty, T_\eta]$ and $(x_k)_{k \in \N} = ((x_k)_1, \dots, (x_k)_N)_{k \in \N} \subset D_\eta(t)$ such that
	\begin{align}
		\partial_t u (t_k,x_k) \rightarrow 0 \text{ as } k \rightarrow \infty.
	\end{align} 
	Without loss of generality we can assume that either $(t_k)_{k \in \N}$ converges to some $t^* \in (-\infty, T_\eta]$ or $t_k \rightarrow -\infty$ as $k \rightarrow \infty$.
	In the former case  \eqref{Appendix:condition_on_transition_zone} implies that $((x_k)_1)_{k \in \N}$ is bounded. So we can assume that $(x_k)_1 \rightarrow x^*_1$ as $k \rightarrow \infty$. Let us now set
	\begin{align}
		u_k(t,x) := u(t,x+x_k) \text{ for all } (t,x) \in (-\infty, T_\eta] \times\set { x \in D | x_1 \geq 1  } .
	\end{align}
	Then by parabolic estimates it follows that 
	\begin{align}
		u_k(t,x) \rightarrow u^*(t,x) \in C^{1,2}_\text{loc}((-\infty, T_\eta] \times \set { x \in D | x_1 \geq 1  }).
	\end{align}
	The limit function $u^*$ does satisfy \eqref{DiffEqu} on $(-\infty, T_\eta] \times \set { x \in D | x_1 \geq 1  }$ and by \eqref{Appendix:time_derivative_of_solution_is_positive} and the convergence above we find that
	\begin{align}
		\partial_t u^*(t^*, 0 ) = 0 \text{ and } \partial_t u^*(t,x) \geq 0 \text{ in } (-\infty, T_\eta] \times \set { x \in D | x_1 \geq 1  } .
	\end{align}
	Applying the strong maximum principle to $\partial_t u^*$ we obtain
	\begin{align}
		\partial_t u^* \equiv 0 \text{ in } (-\infty, T_\eta] \times \set { x \in D | x_1 \geq 1  } .
	\end{align}
	But this is impossible since \eqref{Anfangsbedingung} implies that
	\begin{align}
		u^*(t,x) - \phi_1(x_1 + x^*_1+ c_1 t) \rightarrow 0 \text{ as } t \rightarrow - \infty \text{ uniformly in }\set { x \in D | x_1 \geq 1  } .
	\end{align}
	
	In the case where $t_k \rightarrow - \infty$ as $k \rightarrow \infty$ we set
	\begin{align}
		u_k(t,x) := u(t+t_k,x+x_k) \text{ in } (-\infty, T_\eta] \times \set { x \in D | x_1 \geq 1  } .
	\end{align}
	Then as above we can find a subsequence such that $u_k \rightarrow u^*$ in $ C^{1,2}_\text{loc}((-\infty, T_\eta] \times \set { x \in D | x_1 \geq 1  })$, where
	\begin{align}
	\partial_t u^* (0,0) =0 \text{ and } \partial_t u^* (t,x) \geq 0 \text{ in } (-\infty, T_\eta] \times \set { x \in D | x_1 \geq 1  } 
	\end{align}
	and again by the strong maximum principle we conclude
	\begin{align}
		\partial_t u^* \equiv 0 \text{ in } (-\infty, T_\eta] \times \set { x \in D | x_1 \geq 1  } .
	\end{align}
	But this is again impossible because up to choosing a subsequence it holds that
	\begin{align}
		u^*(t,x) = \phi_1(x_1+c_1t + \alpha) , \text{ where } \alpha \in [-M_\eta, M_\eta] . 
	\end{align}
	Hence the Lemma is proved.
\end{proof}

With the help of this Lemma we can now prove the uniqueness of entire solutions of \eqref{DiffEqu} subject to \eqref{Anfangsbedingung}. Suppose there exists another entire solution $v$ of \eqref{DiffEqu} satisfying \eqref{Anfangsbedingung}. We choose $\eta >0$ sufficiently small such that 
\begin{align} \label{Appendix:bound_on_du_f_near_0_and_1}
	\partial_u f(x,s) \leq - \beta \text{ for } s \in [0, 2 \eta] \cup [1-2 \eta, 1] , x \in D
\end{align}
for some $\beta >0$. 
Then for any $\epsilon \in (0, \eta)$ we find $t_\epsilon \in \R$ such that 
\begin{align} \label{Appendix:choice_of_t_epsilon_from_AB}
	\norm {v(t, \cdot) - u(t, \cdot)}_{L^\infty} < \epsilon \text{ for } - \infty < t < t_\epsilon .
\end{align}
Let us define the candidates for our super- and subsolutions for each $t_0 \in (-\infty, T_\eta - \sigma \epsilon]$ as 

\begin{align}
	W^+ (t,x) &:= \min \set {1, u(t_0+t+\sigma \epsilon (1-e^{-\beta t}), x )  + \epsilon e^{-\beta t } } , \\
	W^- (t,x) &:= \max \set {0, u(t_0+t-\sigma \epsilon (1-e^{-\beta t}), x )  - \epsilon e^{-\beta t } } ,
\end{align}
	where $\sigma>0$ will be specified later. Now by \eqref{Appendix:choice_of_t_epsilon_from_AB}
	\begin{align} \label{Appendix:initial_condition_comparison_argument_uniqueness_proof}
		W^-(0,x) \leq v(t_0,x) \leq W^+(0,x) \text{ for }  x \in D.  
	\end{align}
	Next we show that $W^+$ and $W^-$ are super- and subsolutions in the time range $t \in [0, T_\eta - t_0 - \sigma \epsilon ]$. It suffices to check this on $\set {W^+ <1}$, since $1$ already is a solution of \eqref{DiffEqu}. We see that
	\begin{align}
		\sL W^+ &= \partial_t W^+ - \Delta W^+ - f(x, W^+)  \\
		&= \sigma \epsilon \beta e^{-\beta t } \partial_t u - \epsilon \beta e^{-\beta t} + f(x,u) - f(x,u+\epsilon e^{-\beta t}) \\
		&= \epsilon e^{-\beta t} \bra { \sigma \beta \partial_t u - \beta - \partial_u f (x, u + \theta e^{-\beta t})      } ,
	\end{align}
	where $\theta(t,x)$ is some function such that $\theta(t,x) \in [0,1]$. (For $u$ and $\partial_t u$ we have always omitted the arguments $( t_0 + t + \sigma \epsilon (1-e^{-\beta t}) ,x   )$ .)
	
	We do now distinguish the following cases:
	
	If $x \in D_\eta(t+t_0 + \sigma \epsilon (1-e^{-\beta t}) )    $, then by Lemma \ref{Appendix:Lemma:bound_from_below_on_temporal_derivative_on_transition_zone} 
	
	\begin{align}
		\sL W^+ \geq \epsilon e^{- \beta t} \bra {  \sigma \beta \delta - \beta - \norm { \partial_u f  }_{L^\infty} }
	\end{align}
   Therefore we get $\sL W^+>0$ if we choose $\sigma >0$ sufficiently large (independently of $\epsilon$). 
   
   On the other hand, if $x \notin D_\eta(t+t_0 + \sigma \epsilon (1-e^{-\beta t})) $ then 
   
   \begin{align}
   	u + \theta \epsilon e^{-\beta t} \in [0,2\eta] \cup [1-\eta, 1]
   \end{align}
   and consequently by \eqref{Appendix:bound_on_du_f_near_0_and_1} one sees that $\partial_u f (x, u+\theta \epsilon e^{-\beta t}) \leq - \beta$ and thereby
   \begin{align}
   \sL W^+ \geq \epsilon e^{-\beta t} (-\beta + \beta ) =0 .
   \end{align}

   Hence we know that $\sL W^+ \geq 0$ for all $t \in [0, T_\eta - t_0 - \sigma \epsilon], x \in D$. Similarly one proves $\sL W^- \leq 0$ in this region.
   Together with \eqref{Appendix:initial_condition_comparison_argument_uniqueness_proof} we get that
   \begin{align}
   	W^-(t,x) \leq v(t_0+t,x) \leq W^+(t,x) \text{ for } t \in [0, T_\eta -t_0 -\sigma \epsilon] , x\in D.
   \end{align}
	Substituting $t_0 + t$ by $t$ we can rewrite this inequality as
	\begin{align}
		u(t-\sigma \epsilon (1-e^{-\beta (t-t_0)}),x) - \epsilon e^{-\beta ( t- t_0)} 
		&\leq v(t,x ) \\
		&\leq u(t+\sigma \epsilon (1-e^{-\beta (t-t_0)}),x) + \epsilon e^{-\beta (t-t_0)}
	\end{align}
	for $t \in [t_0, T_\eta - \sigma \epsilon]$ and (still) $t_0 \in (-\infty , T_\eta - \sigma \epsilon]$. Letting $t_0 \rightarrow - \infty$, we obtain 
	\begin{align}
		u(t-\sigma \epsilon , x ) \leq v(t,x) \leq u(t+\sigma \epsilon , x) \text{ for all } t \in (-\infty, T_\eta -\sigma \epsilon], x \in D. 
	\end{align}
	Hence by the comparison principle the above inequalities hold for all $(t,x) \in \R \times D$. Letting $\epsilon \rightarrow 0$ ($\sigma$ was independent of $\epsilon$) we find
	\begin{align}
		u \equiv v \text{ in } \R \times D
	\end{align}
	and the proof is finished.

\end{proof}

%
%


\nomenclature{$\mathcal{L}^N$}{The $N$-dimensional Lebesgue measure}%
\nomenclature{$B_R(x)$}{The open ball of radius $R$ centred in $x$}%
\nomenclature{$C_R(x)$}{The open cube of side length $R$ centred in $x$}%
\nomenclature{$\omega_N$}{The $N$-dimensional Lebesgue-measure of the $N$-dimensional unit ball}%

%
%

\bibliographystyle{abbrv}
\bibliography{Change_of_speed_v3.bib}

\end{document}